\newtheorem{theorem}{Theorem}
\newtheorem{lemma}{Lemma}
\theoremstyle{definition}
\newtheorem{definition}{Definition}
\newtheorem{assumption}{Assumption}
\def\eop{\hfill {\large $\Box$}}
\def\E{\mathrm{E}}
\def\var{\mathrm{Var}}
\def\cov{\mathrm{Cov}}
\def\Gcal{\mathcal{G}}
\def\Hcal{\mathcal H}
\def\Scal{\mathcal S}
\newcommand{\indep}{\;\, \rule[0em]{.03em}{.67em} \hspace{-.27em}
	\rule[-.02em]{.7em}{.03em} \hspace{-.27em}
	\rule[0em]{.03em}{.67em}\;\,}
\newcommand{\trans}{^{\mbox{\tiny {\sf T}}}}
\begin{document}

%
%

\renewcommand{\thefootnote}{}
$\ $\par

\fontsize{12}{14pt plus.8pt minus .6pt}\selectfont \vspace{0.8pc}
\centerline{\large\bf SLICED INVERSE REGRESSION IN METRIC SPACES} 
\vspace{.4cm} 
\centerline{Joni Virta$^1$, Kuang-Yao Lee$^2$, and Lexin Li$^3$}
\vspace{.4cm} 
\centerline{\it $^1$University of Turku}
\centerline{\it $^2$Temple University}
\centerline{\it $^3$University of California at Berkeley}
 \vspace{.55cm} \fontsize{9}{11.5pt plus.8pt minus.6pt}\selectfont

\begin{quotation}
\noindent {\it Abstract:}
In this article, we propose a general nonlinear sufficient dimension reduction (SDR) framework when both the predictor and response lie in some general metric spaces. We construct reproducing kernel Hilbert spaces whose kernels are fully determined by the distance functions of the metric spaces, then leverage the inherent structures of these spaces to define a nonlinear SDR framework. We adapt the classical sliced inverse regression of \citet{Li:1991} within this framework for the metric space data. We build the estimator based on the corresponding linear operators, and show it recovers the regression information unbiasedly. We derive the estimator at both the operator level and under a coordinate system, and also establish its convergence rate. We illustrate the proposed method with both synthetic and real datasets exhibiting non-Euclidean geometry.

\vspace{9pt}
\noindent {\it Key words and phrases:}
Covariance operator; Metric space; Reproducing kernel Hilbert space; Sliced inverse regression; Sufficient dimension reduction.
\par
\end{quotation}\par

\fontsize{12}{14pt plus.8pt minus .6pt}\selectfont

\section{Introduction}
\label{sec:intro}

High-dimensional data are now prevailing in almost every branch of science and business, whereas dimension reduction plays a central role in the analysis of such data. A  particularly useful reduction paradigm is \emph{sufficient dimension reduction} (SDR), which embodies a family of methods that aim to reduce the dimensionality while losing no information in a regression setting. Since the pioneering work of \emph{sliced inverse regression} \citep[SIR]{Li:1991}, SDR has enjoyed a rapid development in the past three decades. For a univariate response $Y$ and a $p$-dimensional predictor $X$, SDR seeks a low-dimensional representation, usually in the form of linear combinations $\beta\trans X$, for a $p \times d$ matrix $\beta = (\beta_1, \ldots, \beta_d)$ with $d \leq p$, such that,
\begin{align}\label{eq:linear_sdr}
Y \indep X \mid \beta_1\trans X, \ldots, \beta_d\trans X.
\end{align}
As such, $\beta\trans X$ contains full regression information of $Y$ given $X$, and since $d$ is often much smaller than $p$, dimension reduction is achieved. SDR then seeks the minimum subspace spanned by $\beta$, called the \emph{central subspace}, which uniquely exists under very mild conditions \citep{Yin:2008}. Originating from SIR \citep{Li:1991},  there has been a large body of methods proposed for SDR, mostly in a model-free fashion that does not impose any specific parametric form for the association between $Y$ and $\beta\trans X$. Examples include \citet{Cook:1991, Li:1992, Cook:2002, Xia:2002, Li:2007, Ma:2012, Ma:2013}, among many others. See also \citet{Libook} for a comprehensive review. 

SDR in \eqref{eq:linear_sdr} achieves \emph{linear dimension reduction}, as the low-dimensional representation takes the form of linear combinations of $X$. It preserves the original coordinates of $X$ and is easier to interpret; nevertheless, it is less flexible. A more recent line of SDR research instead seeks \emph{nonlinear dimension reduction} \citep{Fukumizu:2004,Fukumizu:2009,Li:2011, Lee:2013,Li:2017}, such that, 
\begin{align}\label{eq:nonlinear_sdr}
Y \indep X \mid f_1(X),\ldots, f_d(X),
\end{align}
where $ f_1, \ldots, f_d$ are some functions in a Hilbert space. Nonlinear SDR is more flexible, and may require a smaller number of functions than its linear counterpart to capture the full regression information, though it is generally harder to interpret. 

Despite the substantial progress of SDR, most existing SDR solutions target data that reside in a Euclidean space. However, modern data objects are becoming increasingly complex, and often reside in non-Euclidean spaces. Such data are routinely collected in a wide range of applications, such as medical imaging, computational biology, and computer vision, and it is of great interest to understand associations among those complex data objects \citep{LinZhu2017, CorneaZhu2017, Dubey2019, Petersen2019, lin2019nonlinear, pan2020ball}. As examples, we consider geometric data, positive definite matrix data, and compositional data in this article. For instance, in applications such as brain structural and functional connectivity analysis \citep{Zhu2009, Zhang2020}, the data usually come in the form of positive definite matrices, which measure the connectivity strengths of pairs of nodes of a network and admit a certain manifold structure. In applications such as chemistry, geology, and microbiome analysis \citep{LuLi2019}, the data are the proportions of individual components that sum to a fixed constant. Meanwhile, there are many other examples of complex object data \citep{WangMarron2007}. In all these examples, the data reside in some non-Euclidean spaces, and a proper metric is needed in each case to characterize the intrinsic features of the data. 

In this article, we propose a general nonlinear SDR framework when both the predictor and response lie in some general, and possibly different, metric spaces. Our key idea is to construct a pair of reproducing kernel Hilbert spaces (RKHS), whose kernels are fully determined by the distance functions of the metric spaces. We then leverage the inherent structures of these spaces to define a nonlinear SDR framework for the metric space data, and further adapt sliced inverse regression of \citet{Li:1991} within this framework. We build the estimator based on some linear operators, and show it recovers the regression information unbiasedly. We derive the estimator at both the operator level and under a coordinate system. We also establish the convergence rate of the estimator under both settings when the response lies on a general metric space, and when the response is categorical. We illustrate the proposed method with both synthetic and real datasets exhibiting non-Euclidean geometry. 

Our proposal is related to but also clearly differs from the nonlinear SDR method of \citet{Lee:2013}, and some recent SDR solutions involving functional or non-Euclidean data such as \citet{yeh2008nonlinear, Li:2017, tomassi2019sufficient, ying2020fr, LeeLi2022}. In particular, \citet{Lee:2013} developed a general framework for nonlinear SDR and proposed to estimate the functions $f_1, \ldots , f_d $ in \eqref{eq:nonlinear_sdr} as the eigenfunctions of some linear operator defined on a Hilbert space $\Hcal$, but they still targeted the Euclidean data. Besides, they took $\Hcal$ to be an $L_2$-space at the population level and an RKHS at the sample level. Our framework is similar to theirs, but we aim at data residing in a general metric space. Moreover, we take $\Hcal$ to be an RKHS at both the population and sample levels, which makes the connection between the population and sample versions of the estimation procedure more transparent. \cite{yeh2008nonlinear} proposed kernel SIR under the framework of \eqref{eq:nonlinear_sdr}, but required a functional version of the linearity condition. We instead adopt a general form of conditional independence based on $\sigma$-fields and avoid relying on the linearity condition. \citet{Li:2017} considered nonlinear SDR for functional data, where $X$ is a function residing in some Hilbert space.  Relatedly, \citet{LeeLi2022} studied linear SDR when both $X$ and $Y$ are functions in some Hilbert space. By contrast, we consider more general data objects than functional data.  \citet{tomassi2019sufficient} developed linear SDR for compositional data, but restricted to a specific set of parametric models for the conditional distribution of $X$ given $Y$. \citet{ying2020fr} developed SDR when the response is in a metric space and the predictors reside in a Euclidean space. Since the dimension reduction is to be performed for the predictors, our method differs considerably from that of \citet{ying2020fr}. 

The rest of the article is organized as follows. Section \ref{sec:metric_sdr} develops the general framework of nonlinear SDR for data in metric spaces, and Section \ref{sec:msir} derives the metric version of SIR under this framework. Section \ref{sec:finite_sample} describes the finite-sample implementation, and Section \ref{sec:asymptotics} studies the convergence properties of the estimator. Section \ref{sec:examples} presents the numerical studies, and Appendix \ref{sec:proofs} collects all the technical proofs.

\section{Nonlinear SDR for Metric Space Data}
\label{sec:metric_sdr}

In this section, we propose a general framework for conducting nonlinear SDR for data residing in arbitrary metric spaces. It involves three main steps: defining a minimal $\sigma$-field that captures full regression information, constructing reproducing kernel Hilbert spaces of both $X$ and $Y$ from the metric spaces, and using the RKHSs to define a representation of the minimal $\sigma$-field that is easier to estimate. 

Let $(\Omega, \mathcal{F}, P)$ be a complete probability space. Let $(\Omega^0_X, d_X)$, $(\Omega^0_Y, d_Y) $ be arbitrary separable metric spaces, in which the predictor and the response, respectively, take values. We make no further assumption on the data space and, depending on $\Omega^0_X$, $\Omega^0_Y$, there may be multiple feasible choices for the metrics $d_X$, $d_Y$. See, for instance, Section \ref{sec:examples} where we take $\Omega^0_X$ to be some manifold spaces and consider different choices of metrics for $d_X$. 

Let $\mathcal{F}_X$ and $\mathcal{F}_Y$ be the Borel $\sigma$-fields generated by the open sets in the metric topology in $\Omega^0_X$ and $\Omega^0_Y$, respectively. Consider $X : \Omega \rightarrow \Omega^0_X$ that is a $\mathcal{F}/\mathcal{F}_X$-measurable random variable with the distribution $P_X = P \circ X^{-1}$, and $Y : \Omega \rightarrow \Omega^0_Y$ that is a $\mathcal{F}/\mathcal{F}_Y$-measurable random variable with the distribution $P_Y = P \circ Y^{-1}$. For simplicity, suppose the joint random variable $(X, Y)$ is $\mathcal{F}/(\mathcal{F}_X \times \mathcal{F}_Y)$-measurable. Let $P_{X \mid Y}: \mathcal{F}_X \times \Omega^0_Y \rightarrow \mathbb{R} $ be the conditional distribution of $X$ given $Y = y$, and suppose the set $\{ P_{X \mid Y}( \cdot \mid y ) \mid y \in \Omega^0_Y \} $ is dominated by a $\sigma$-finite measure. Let $\sigma_X$ be the $\sigma$-field generated by $X$. We adopt the following definition from \citet{Lee:2013}. 

\begin{definition}\label{def:sdr_field}
A sub-$\sigma$-field $\mathcal{G}$ of $\sigma_X$ is said to be a sufficient dimension reduction $\sigma$-field for $Y$ given $X$, if the random elements $Y$ and $X$ are conditionally independent given $\mathcal{G}$, in that $Y \indep  X \mid \mathcal{G}$. When the set of conditional distributions, $\{ P_{X \mid Y}( \cdot \mid y ) \mid y \in \Omega^0_Y \} $ is dominated by a $ \sigma$-finite measure, the intersection of all SDR $\sigma$-fields is itself an SDR $\sigma$-field. It is called the central $\sigma$-field, and denoted by $\mathcal{G}_{Y \mid X}$.
\end{definition}

\noindent 
Definition \ref{def:sdr_field} suggests that there exists uniquely a smallest SDR $\sigma$-field. In our pursuit of nonlinear SDR, we seek a set of functions $f_1, \ldots , f_d$ lying in some suitable function space $\Hcal_X$ that are $\mathcal{G}_{Y \mid X}$-measurable, and dimension reduction is achieved by replacing $X$ with the corresponding sufficient predictors $f_1(X), \ldots , f_d(X)$. 

A natural candidate for the function space $\Hcal_X$ is $L_2(P_X)$, the class of all square integrable functions $f: \Omega^0_X \rightarrow \mathbb{R}$ and, indeed, this is what \cite{Lee:2013} used. We instead take $\Hcal_X$ to be a suitably defined reproducing kernel Hilbert space, a choice that makes the subsequent methodology and theory development considerably simpler. More specifically, to connect the RKHS $\Hcal_X$ to the metric structure of the space $\Omega^0_X$, we consider a positive semi-definite kernel, $\kappa_X: \Omega^0_X \times \Omega^0_X \rightarrow \mathbb{R}$, for which there exists a function $\rho: \mathbb{R} \rightarrow \mathbb{R}$, such that, for all $x_1, x_2 \in \Omega^0_X$, 
\begin{align} \label{eq:kernel_metric}
\kappa(x_1, x_2) = \rho\{ d_X(x_1, x_2) \}, 
\end{align}
where $d_X$ is the metric of $\Omega^0_X$. We further impose the following finite second-order moment requirement for the kernel function, which is essentially the RKHS-equivalent of requiring a random variable to be square integrable, and is a rather mild condition. 

\begin{assumption}\label{assu:L2_subset}
Suppose $\mathrm{E} \{ \kappa_X (X, X) \} < \infty$, and $\mathrm{E} \{ \kappa_Y (Y, Y) \} < \infty$.
\end{assumption}

\noindent
There are multiple choices for this type of kernel function, for instance, the Gaussian kernel and the Laplace kernel, among others. Throughout our implementation, we employ the Gaussian kernel with a positive covariance.

Given the kernels $\kappa_X$ and $\kappa_Y$, let $\Hcal^0_X$ and $\Hcal^0_Y$ be the RKHSs generated by $\kappa_X$ and $\kappa_Y$, respectively. By Assumption \ref{assu:L2_subset}, we have that $\Hcal^0_X \subseteq L_2(P_X)$ and $\Hcal^0_Y \subseteq L_2(P_Y)$. Moreover, by the Riesz representation theorem, there exist a unique mean element $\mu_X \in \Hcal^0_X$, and a unique covariance operator $\Sigma_{XX}^0$, such that, 
\begin{align*}
\begin{split}
\langle f, \mu_X \rangle_{\Hcal^0_X} & = \E \{ f (X) \}, \;\; \textrm{ for all } \;\; f \in \Hcal^0_X, \\
\langle f, \Sigma_{XX}^0 f' \rangle_{\Hcal^0_X} & = \cov \{ f(X), f'(X) \}, \;\; \textrm{ for all } \;\;  f, f' \in \Hcal^0_X.  
\end{split}
\end{align*}
Note that every $f_0 \in \mathrm{ker}(\Sigma_{XX}^0)$ satisfies that $\var\{ f_0(X) \} = \langle f_0, \Sigma_{XX}^0 f_0 \rangle_{\Hcal^0_X} = 0$, and is almost surely equal to a constant, where $\mathrm{ker}(\cdot)$ denotes the null space. As such, we further restrict our attention to $\Hcal_X = \overline{\mathrm{ran}}(\Sigma_{XX}^0)$, where $\mathrm{ran}(\cdot)$ denotes the range, and $\overline{\mathrm{ran}}(\cdot)$ denotes the closure of the range. 

\begin{lemma}\label{lem:almost_sure_representer}
Suppose Assumption \ref{assu:L2_subset} holds. There exists a set $\Omega_X \subseteq \Omega^0_X$, such that $P_X(\Omega_X) = 1$, and $\kappa_X( \cdot , x) - \mu_X \in \Hcal_X$ for all $x \in \Omega_X$.
\end{lemma}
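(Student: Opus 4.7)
The plan is to show that $\kappa_X(\cdot,x) - \mu_X$ lies in $\Hcal_X = \overline{\mathrm{ran}}(\Sigma_{XX}^0)$ by checking that it is orthogonal to $\ker(\Sigma_{XX}^0)$ for $P_X$-almost every $x$, and then upgrading this from a single element $g$ of the kernel to \emph{all} elements simultaneously via separability and the reproducing property.

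First I would observe that $\Sigma_{XX}^0$ is self-adjoint, so the standard decomposition gives $\Hcal^0_X = \ker(\Sigma_{XX}^0) \oplus \overline{\mathrm{ran}}(\Sigma_{XX}^0)$, and hence it suffices to verify that $\langle \kappa_X(\cdot,x) - \mu_X, g\rangle_{\Hcal^0_X} = 0$ for every $g \in \ker(\Sigma_{XX}^0)$. By the reproducing property and the defining relation of $\mu_X$, this inner product equals $g(x) - \E\{g(X)\}$. Now, exactly as noted in the paragraph preceding the lemma, any $g \in \ker(\Sigma_{XX}^0)$ satisfies $\var\{g(X)\} = 0$, so $g(X) = \E\{g(X)\}$ holds $P$-almost surely; equivalently, $g(x) = \E\{g(X)\}$ for $x$ in some set $\Omega_g \subseteq \Omega_X^0$ of full $P_X$-measure. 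For a single $g$ this already gives what we want, but $\Omega_g$ depends on $g$ and the union over an uncountable family of null complements need not be null.

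The main obstacle is therefore to produce one common full-measure set $\Omega_X$ that works for every $g \in \ker(\Sigma_{XX}^0)$ simultaneously. To handle this I would invoke separability: since $(\Omega_X^0, d_X)$ is separable and $\kappa_X$ is determined by $d_X$ through the continuous correspondence \eqref{eq:kernel_metric}, the RKHS $\Hcal^0_X$ is separable, hence so is its closed subspace $\ker(\Sigma_{XX}^0)$. Let $\{g_n\}_{n \in \mathbb{N}}$ be a countable dense subset of $\ker(\Sigma_{XX}^0)$, and for each $n$ let $\Omega_n \subseteq \Omega_X^0$ be a full-$P_X$-measure set on which $g_n(x) = \E\{g_n(X)\} = \langle g_n,\mu_X\rangle_{\Hcal^0_X}$. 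Define $\Omega_X = \bigcap_n \Omega_n$, which satisfies $P_X(\Omega_X) = 1$ by countable additivity.

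Finally, to pass from the countable dense family to an arbitrary $g \in \ker(\Sigma_{XX}^0)$, fix $x \in \Omega_X$ and choose $g_{n_k} \to g$ in $\Hcal^0_X$. By the reproducing property,
\begin{equation*}
|g(x) - g_{n_k}(x)| = |\langle g - g_{n_k}, \kappa_X(\cdot,x)\rangle_{\Hcal^0_X}| \le \|g - g_{n_k}\|_{\Hcal^0_X}\,\kappa_X(x,x)^{1/2} \to 0,
\end{equation*}
and similarly $\langle g_{n_k}, \mu_X\rangle_{\Hcal^0_X} \to \langle g, \mu_X\rangle_{\Hcal^0_X}$ by Cauchy--Schwarz. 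Passing to the limit in $g_{n_k}(x) = \langle g_{n_k},\mu_X\rangle_{\Hcal^0_X}$ gives $g(x) = \langle g,\mu_X\rangle_{\Hcal^0_X}$, so $\langle \kappa_X(\cdot,x) - \mu_X, g\rangle_{\Hcal^0_X} = 0$ for every $g \in \ker(\Sigma_{XX}^0)$. Therefore $\kappa_X(\cdot,x) - \mu_X \in \ker(\Sigma_{XX}^0)^{\perp} = \Hcal_X$ for all $x \in \Omega_X$, as claimed.
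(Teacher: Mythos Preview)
Your proof is correct and follows essentially the same route as the paper's: both arguments exploit separability of $\Hcal_X^0$ (and hence of its closed subspace $\ker(\Sigma_{XX}^0)$) to extract a countable dense subset, intersect the corresponding full-measure sets, and then pass from the countable family to an arbitrary $g\in\ker(\Sigma_{XX}^0)$ by a density-plus-continuity argument. The only cosmetic difference is that the paper bounds $|\langle \kappa_X(\cdot,x)-\mu_X,\,g-h_j\rangle_{\Hcal_X^0}|$ directly via Cauchy--Schwarz, whereas you split this into a pointwise-evaluation term and a $\mu_X$ term; and one small caution: your justification of separability via ``the continuous correspondence \eqref{eq:kernel_metric}'' implicitly assumes $\rho$ is continuous, which is not part of Assumption~\ref{assu:L2_subset}---the paper simply cites \citet[Lemma~4.3]{lukic2001stochastic} for separability of $\Hcal_X^0$ from separability of $(\Omega_X^0,d_X)$.
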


\noindent
Lemma \ref{lem:almost_sure_representer} reveals that the functions $\kappa_X( \cdot , x) - \mu_X$, for $x \in \Omega_X$, belong to the space $\Hcal_X$, which allows us to perform centering through the inner product, $\langle f, \kappa_X( \cdot , x) - \mu_X \rangle_{\Hcal_X} = f(x) - \E \{ f(X) \}$. Its proof also shows that the space $\Hcal_X$ admits an alternative characterization, i.e., $\Hcal_X = \overline{\mathrm{span}}\{\kappa_X( \cdot , x) - \mu_X : x \in \Omega_X \}$, where $\overline{\mathrm{span}}(\cdot)$ denotes the closure of the say spanned by the set of functions. We briefly remark that a similar result was obtained in \citet[Lemma~1]{Li:2017}. However, their proof implicitly assumed that the only set for which $P_X$ assigns a zero probability is the empty set, essentially ruling out all continuous distributions, whereas our Lemma~\ref{lem:almost_sure_representer} fixes this issue. We also remark that, this characterization does not imply that the elements $f \in \mathcal{H}_X$ are centered in the sense that $\mathrm{E}\{f(X)\} =0$. Instead, focusing on $\Hcal_X$ removes the constant functions that are of no interest in our dimension reduction pursuit.  We construct $\mu_Y$, $\Sigma^0_{YY}$,  and the RKHS $\Hcal_Y$ in an analogous manner.

\begin{definition}\label{def:central-class}
We call the set of all $f \in \Hcal_X$ that are $\Gcal_{Y \mid X}$-measurable the \textit{central class}, and denote this set by $\Scal_{Y \mid X}$. 
\end{definition}

\noindent
We make two remarks.  First, our notion of dimension reduction is based on the smallest SDR $\sigma$-field, i.e., the central $\sigma$-field. In our setting, the concept of ``dimensionality" is less obvious than that in the classical SDR setting, which is simply the dimension of the central subspace. This is because there are sets that generate the same $\sigma$-field, but with very different dimensions. Nevertheless, our formulation is useful when one is interested in reducing the dimensionality in the class sense, as the central class induced by the central $\sigma$-field contains all such sets of functions generating the same $\sigma$-field, and we seek the smallest one.  Second, the relation between the central $\sigma$-field $\Gcal_{Y \mid X}$ and the central class $\Scal_{Y \mid X}$ is analogous to the relation between the central subspace and the sufficient predictors in the classical setting. That is, in lieu of estimating $\Gcal_{Y \mid X}$, we search for subsets of elements of $\Scal_{Y \mid X}$, which are more concrete and easier to estimate.

\section{Metric Sliced Inverse Regression}
\label{sec:msir}

In this section, we derive the population-level sliced inverse regression for metric space data. Recall the classical SIR \citep{Li:1991} when both $X$ and $Y$ lie in an Euclidean space. It estimates the central subspace by the range of the matrix,
\begin{align}\label{eq:SIR_matrix}
\var(X)^{-1} \var\{ \E( X \mid Y) \},
\end{align}
We next derive the operator analogue for \eqref{eq:SIR_matrix} for two cases: the general case of $Y$ residing in a metric space, and the special case of $Y$ being a discrete random variable.

\subsection{Metric response}

We first define a number of covariance operators that serve as the main building blocks of our nonlinear metric SIR procedure.
\begin{eqnarray}\label{eq:three_covariance_operators}
&\Sigma_{XX} : \Hcal_X \rightarrow \Hcal_X, & \langle f, \Sigma_{XX} f' \rangle_{\Hcal_X} = \cov \{ f(X), f'(X) \}, \nonumber \\
&\Sigma_{XY} : \Hcal_Y \rightarrow \Hcal_X, & \langle f, \Sigma_{XY} g \rangle_{\Hcal_X} = \cov \{ f(X), g(Y) \}, \\
&\Sigma_{YY} : \Hcal_Y \rightarrow \Hcal_Y, & \langle g', \Sigma_{YY} g \rangle_{\Hcal_Y} = \cov \{ g'(Y), g(Y) \}, \nonumber
\end{eqnarray}
for $f, f' \in \Hcal_X$ and $g, g' \in \Hcal_Y$. In addition, the cross-covariance operator $\Sigma_{YX} : \Hcal_X \rightarrow \Hcal_Y$ can be obtained as $\Sigma_{YX} = \Sigma_{XY}^*$, the adjoint of the operator $\Sigma_{XY}$. We also note that, because $\Hcal_X = \overline{\mathrm{ran}}(\Sigma^0_{XX})$, we have $\mathrm{ker}(\Sigma_{XX}) = \{ 0 \}$, and $\overline{\mathrm{ran}}(\Sigma_{XX}) = \Hcal_X$.

We next introduce two regularity conditions. 

\begin{assumption}\label{assu:L2_dense_1}
Suppose that $\Hcal_{X} + \mathbb{R}$ and $\Hcal_{Y} + \mathbb{R}$ are dense in $L_2(P_X)$ and $L_2(P_Y)$, respectively, where $+$ denotes the direct sum. 
\end{assumption}

\begin{assumption}\label{assu:inclusion}
Suppose $\mathrm{ran}(\Sigma_{YX}) \subseteq \mathrm{ran}(\Sigma_{YY})$, and $\mathrm{ran}(\Sigma_{XY}) \subseteq \mathrm{ran}(\Sigma_{XX})$.
\end{assumption}

\noindent
Assumption \ref{assu:L2_dense_1} is typical in kernel learning and generally holds, e.g., when $\kappa_X$ is a Gaussian kernel \citep{Fukumizu:2009}. In this assumption, by ``dense'' we mean that, for every $ f \in L_2(P_X)$, there exists a sequence of elements $f_n \in \Hcal_{X}$, such that $\mathrm{var}\{ f(X) - f_n(X) \} \rightarrow 0$, as $n \rightarrow \infty$.  Assumption \ref{assu:inclusion} is essentially a smoothness condition on the relation between $X$ and $Y$ \citep{li2018linear}, and similar conditions are commonly imposed in SDR \citep{ying2020fr, li2021dimension}. It guarantees that the operator $\Sigma_{YY}^\dagger \Sigma_{YX}$ is both well-defined and bounded \citep[Theorem 1]{douglas1966majorization}, where $\dagger$ denotes the Moore-Penrose pseudo-inverse of $\Sigma_{YY}$; see \cite{li2018linear} for more details on the Moore-Penrose pseudo-inverse of an operator. 

The next lemma provides some useful expressions for the conditional moments of $X$ given $Y$ at the operator level. They are essential to construct the operator analogue for the SIR estimator \eqref{eq:SIR_matrix}. In addition, they help turn conditional moments into unconditional ones, avoiding the slicing step in the original SIR.

\begin{lemma}\label{lem:sir_operator}
Suppose Assumptions \ref{assu:L2_subset}, \ref{assu:L2_dense_1} and \ref{assu:inclusion} hold. Then, 
\begin{enumerate}[(a)]
\item For any $f \in \Hcal_X$, $\E \{ f(X) \mid Y \} - \E \{ f(X) \} = \langle \Sigma_{YY}^\dagger \Sigma_{YX} f, \kappa_Y( \cdot, Y ) - \mu_Y \rangle_{\Hcal_Y}$;

\item For any $f, f' \in \Hcal_X$, $\mathrm{Cov} [ \E \{ f(X) \mid Y \},  \E \{ f'(X) \mid Y \} ] = \langle f, \Sigma_{XY} \Sigma_{YY}^{\dagger} \Sigma_{YX} f' \rangle_{\Hcal_X}$. 
\end{enumerate}
\end{lemma}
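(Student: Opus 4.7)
The plan is to establish part (a) first and then derive part (b) as a direct consequence. For part (a), I would define $h := \Sigma_{YY}^\dagger \Sigma_{YX} f \in \Hcal_Y$, which is well-defined by Assumption~\ref{assu:inclusion} since $\Sigma_{YX} f \in \mathrm{ran}(\Sigma_{YX}) \subseteq \mathrm{ran}(\Sigma_{YY})$. The objective is to identify $\E\{f(X) \mid Y\} - \E\{f(X)\}$ with $h(Y) - \E\{h(Y)\}$ almost surely, because by the reproducing property combined with Lemma~\ref{lem:almost_sure_representer}, one has $h(Y) - \E\{h(Y)\} = \langle h, \kappa_Y(\cdot, Y) - \mu_Y\rangle_{\Hcal_Y}$ almost surely, which is the stated conclusion.

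To achieve this identification, I would test against an arbitrary $g \in \Hcal_Y$. The definition of $\Sigma_{YX}$ gives $\langle g, \Sigma_{YX} f\rangle_{\Hcal_Y} = \cov\{g(Y), f(X)\}$, and the identity $\Sigma_{YY}\Sigma_{YY}^\dagger \Sigma_{YX} f = \Sigma_{YX} f$ (which holds because $\Sigma_{YX} f \in \mathrm{ran}(\Sigma_{YY})$ by Assumption~\ref{assu:inclusion} and is a standard property of the Moore--Penrose inverse) rewrites this as $\langle g, \Sigma_{YY} h\rangle_{\Hcal_Y} = \cov\{g(Y), h(Y)\}$. Equating the two expressions and using the tower property of conditional expectation gives $\cov[g(Y),\, \E\{f(X)\mid Y\} - h(Y)] = 0$ for every $g \in \Hcal_Y$. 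By Assumption~\ref{assu:L2_dense_1}, $\Hcal_Y + \mathbb{R}$ is dense in $L_2(P_Y)$, so this vanishing covariance extends to every $g \in L_2(P_Y)$. Jensen's inequality together with Assumption~\ref{assu:L2_subset} shows that $\E\{f(X)\mid Y\} - h(Y)$ itself lies in $L_2(P_Y)$, so I may plug it in as the test function, forcing its variance, and hence itself (up to an additive constant), to vanish almost surely. Taking expectations pins down the constant as $\E\{f(X)\} - \E\{h(Y)\}$, and part (a) follows.

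For part (b), I would apply part (a) twice with $f$ and $f'$, producing $h = \Sigma_{YY}^\dagger \Sigma_{YX} f$ and $h' = \Sigma_{YY}^\dagger \Sigma_{YX} f'$ together with the almost-sure equalities $\E\{f(X)\mid Y\} = h(Y) + c$ and $\E\{f'(X)\mid Y\} = h'(Y) + c'$. Constants do not contribute to covariance, so $\cov[\E\{f(X)\mid Y\}, \E\{f'(X)\mid Y\}] = \cov\{h(Y), h'(Y)\} = \langle h, \Sigma_{YY} h'\rangle_{\Hcal_Y}$. Using $\Sigma_{YY} h' = \Sigma_{YX} f'$, self-adjointness of $\Sigma_{YY}^\dagger$, and the adjoint relation $\Sigma_{XY} = \Sigma_{YX}^*$, I rearrange to $\langle f, \Sigma_{XY}\Sigma_{YY}^\dagger \Sigma_{YX} f'\rangle_{\Hcal_X}$, as claimed.

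The main obstacle is the density step in part (a): the identity $\cov[g(Y), \E\{f(X)\mid Y\} - h(Y)] = 0$ initially only holds for $g \in \Hcal_Y$, but I need it for $g$ equal to the residual itself, which a priori lives only in $L_2(P_Y)$. Passing through Assumption~\ref{assu:L2_dense_1} and verifying the $L_2$ membership of the residual are the delicate points; every other step is essentially operator-theoretic bookkeeping around the Moore--Penrose inverse and the reproducing identity furnished by Lemma~\ref{lem:almost_sure_representer}.
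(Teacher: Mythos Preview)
Your proposal is correct and follows essentially the same route as the paper's proof: both establish the key covariance identity $\cov\{f(X)-h(Y),g(Y)\}=0$ for $g\in\Hcal_Y$ via the defining property of $\Sigma_{YX}$ and the Moore--Penrose relation $\Sigma_{YY}\Sigma_{YY}^\dagger\Sigma_{YX}=\Sigma_{YX}$, then extend to $L_2(P_Y)$ via Assumption~\ref{assu:L2_dense_1}, and finally invoke Lemma~\ref{lem:almost_sure_representer} for the inner-product form; part (b) is in both cases the straightforward covariance computation $\langle h,\Sigma_{YY}h'\rangle_{\Hcal_Y}$. The only cosmetic difference is that you pass to $\E\{f(X)\mid Y\}$ via the tower property before the density step, whereas the paper keeps $f(X)$ and identifies the conditional expectation at the end through the characterization $\E[\{f(X)-Z\}g(Y)]=0$.
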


By Lemma \ref{lem:sir_operator}, the operator $\Sigma_{XY} \Sigma_{YY}^{\dagger} \Sigma_{YX}$ can be seen as the analogue of the matrix $\var\{ \E( X \mid Y) \}$ in \eqref{eq:SIR_matrix}. Besides, the operator $\Sigma_{XX}^{\dagger}$ can be seen as the analogue of $\var(X)^{-1}$ in \eqref{eq:SIR_matrix}. Consequently, a direct operator counterpart of \eqref{eq:SIR_matrix} is, 
\begin{align}\label{eq:sir_operator_definition}
\Lambda_{\mathrm{SIR}} = \Sigma_{XX}^{\dagger} \Sigma_{XY} \Sigma_{YY}^{\dagger} \Sigma_{YX}.
\end{align}
This operator is well-defined by Assumption \ref{assu:inclusion}. Moreover, it is interesting to note that, if we choose linear kernels $\kappa_X, \kappa_Y$, then $\Lambda_{\mathrm{SIR}}$ reduces precisely to the matrix of the canonical correlation analysis (CCA). 

The next theorem shows that the operator $\Lambda_{\mathrm{SIR}}$ is bounded, and that the closure of its range is unbiased for the central class, which is parallel to the classical SIR for linear SDR of Euclidean data. We need an additional regularity condition.

\begin{assumption}\label{assu:L2_dense_3}
Suppose the set $ \mathrm{ran}(\Sigma_{XX}) \cap \Scal_{Y \mid X}^\bot $ is dense in the set $\Scal_{Y \mid X}^\bot$, where the orthogonal complement is taken with respect to $\Hcal_X$. 
\end{assumption}

\noindent
Assumption \ref{assu:L2_dense_3} requires that the intersection between $\mathrm{ran}(\Sigma_{XX})$ and $\Scal_{Y \mid X}^\bot$ is suitably rich in $\Scal_{Y \mid X}^\bot$, which is a mild condition, since $\mathrm{ran}(\Sigma_{XX})$ is, by definition, dense in its closure $\Hcal_X$. Similar condition has been imposed implicitly in \cite{Li:2017}.

\begin{theorem}\label{theo:SIR_range_closure}
Suppose Assumptions \ref{assu:L2_subset} to \ref{assu:L2_dense_3} hold. Then $\Lambda_{\mathrm{SIR}}$ is a bounded operator, and $\overline{\mathrm{ran}} ( \Lambda_{\mathrm{SIR}} ) \subseteq \Scal_{Y \mid X}$. 
\end{theorem}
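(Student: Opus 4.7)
My plan is to establish boundedness from Douglas's majorization theorem and to prove the range inclusion by reducing $\Gcal_{Y\mid X}$-measurability of $\Lambda_{\mathrm{SIR}} f$ to an $L_2$-orthogonality statement powered by the conditional independence that defines $\Gcal_{Y\mid X}$.

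Boundedness is immediate: by Assumption \ref{assu:inclusion} and the Douglas majorization theorem (already cited), both $\Sigma_{XX}^{\dagger}\Sigma_{XY}$ and $\Sigma_{YY}^{\dagger}\Sigma_{YX}$ are bounded, so $\Lambda_{\mathrm{SIR}} = (\Sigma_{XX}^{\dagger}\Sigma_{XY})(\Sigma_{YY}^{\dagger}\Sigma_{YX})$ is bounded as a composition. For the range inclusion, the identity I would establish first is, for every $f,f'\in\Hcal_X$,
\[
\cov\{f'(X),(\Lambda_{\mathrm{SIR}} f)(X)\} = \cov[\E\{f'(X)\mid Y\},\E\{f(X)\mid Y\}].
\]
The left side equals $\langle f',\Sigma_{XX}\Lambda_{\mathrm{SIR}} f\rangle_{\Hcal_X}$ by the definition of $\Sigma_{XX}$. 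Since $\Sigma_{XY}\Sigma_{YY}^{\dagger}\Sigma_{YX}f\in\mathrm{ran}(\Sigma_{XY})\subseteq\mathrm{ran}(\Sigma_{XX})$ by Assumption \ref{assu:inclusion} and $\Sigma_{XX}$ is injective on $\Hcal_X$, the identity $\Sigma_{XX}\Sigma_{XX}^{\dagger}v = v$ holds on this range, so $\Sigma_{XX}\Lambda_{\mathrm{SIR}} f = \Sigma_{XY}\Sigma_{YY}^{\dagger}\Sigma_{YX}f$; Lemma \ref{lem:sir_operator}(b) then delivers the displayed identity. Using Assumption \ref{assu:L2_dense_1}, I would extend the identity to every $f'\in L_2(P_X)$ by approximating $f'$ with elements of $\Hcal_X+\mathbb{R}$, adjusting constants to produce genuine $L_2(P_X)$ convergence, and passing to the limit via continuity of covariance in the variance seminorm and the $L_2$-contractiveness of conditional expectation.

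With the extended identity in hand, fix $f\in\Hcal_X$ and pick any $\xi\in L_2(P_X)$ with $\E(\xi\mid\Gcal_{Y\mid X}) = 0$. Because $\xi$ is $\sigma_X$-measurable, the conditional independence $Y\indep X\mid\Gcal_{Y\mid X}$ gives $\E(\xi\mid Y) = \E\{\E(\xi\mid\Gcal_{Y\mid X},Y)\mid Y\} = \E\{\E(\xi\mid\Gcal_{Y\mid X})\mid Y\} = 0$, so the identity forces $\cov\{\xi,(\Lambda_{\mathrm{SIR}} f)(X)\} = 0$. Specializing to $\xi := (\Lambda_{\mathrm{SIR}} f)(X) - \E\{(\Lambda_{\mathrm{SIR}} f)(X)\mid\Gcal_{Y\mid X}\}$, which satisfies the hypothesis by construction, a tower-property calculation gives $\cov(\xi,\Lambda_{\mathrm{SIR}} f) = \var(\xi)$, so $\var(\xi) = 0$. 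Hence $(\Lambda_{\mathrm{SIR}} f)(X)$ equals almost surely its $\Gcal_{Y\mid X}$-conditional expectation, placing $\Lambda_{\mathrm{SIR}} f\in\Scal_{Y\mid X}$. Closedness of $\Scal_{Y\mid X}$ in $\Hcal_X$---which follows because Assumption \ref{assu:L2_subset} makes RKHS convergence imply $L_2(P_X)$ convergence, and $\Gcal_{Y\mid X}$-measurability is preserved under $L_2$-limits---promotes this to $\overline{\mathrm{ran}}(\Lambda_{\mathrm{SIR}}) \subseteq \Scal_{Y\mid X}$.

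The main obstacle I anticipate is the passage from $\Hcal_X$ to $L_2(P_X)$: Assumption \ref{assu:L2_dense_1} controls only the variance seminorm, so one must carefully adjust constants to promote the density statement to full $L_2$ convergence and then verify the attendant continuity of both sides of the identity. A secondary but delicate point is the Moore--Penrose identity $\Sigma_{XX}\Sigma_{XX}^{\dagger}v = v$ on $\mathrm{ran}(\Sigma_{XX})$, which combines Assumption \ref{assu:inclusion} with the injectivity of $\Sigma_{XX}$ on $\Hcal_X = \overline{\mathrm{ran}}(\Sigma_{XX}^0)$. Assumption \ref{assu:L2_dense_3} would instead enter an alternative route in which one shows $\Scal_{Y\mid X}^{\perp} \subseteq \ker(\Lambda_{\mathrm{SIR}}^{*})$ by first reducing to $h\in\mathrm{ran}(\Sigma_{XX})\cap\Scal_{Y\mid X}^{\perp}$; the direct approach above sidesteps that route.
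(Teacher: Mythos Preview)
Your argument is correct and takes a genuinely different route from the paper. The paper proves the range inclusion dually: it fixes $f \in \Scal_{Y\mid X}^\perp$, first assuming $f = \Sigma_{XX} m \in \mathrm{ran}(\Sigma_{XX})$, shows via the conditional independence that $\E\{m(X)\mid Y\}$ is almost surely constant, applies Lemma~\ref{lem:sir_operator}(b) to obtain $f \in \ker(\Lambda_{\mathrm{SIR}}^*)$, and then invokes Assumption~\ref{assu:L2_dense_3} to lift the conclusion from $\mathrm{ran}(\Sigma_{XX}) \cap \Scal_{Y\mid X}^\perp$ to all of $\Scal_{Y\mid X}^\perp$. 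Your direct approach instead pushes Lemma~\ref{lem:sir_operator}(b) through Assumption~\ref{assu:L2_dense_1} to obtain the covariance identity for every $L_2(P_X)$ test function and then specializes to the residual $\xi$. The payoff is real: your proof never uses Assumption~\ref{assu:L2_dense_3}, so it in fact establishes the theorem under the weaker hypotheses Assumptions~\ref{assu:L2_subset}--\ref{assu:inclusion} alone; the paper's dual route trades this for a cleaner bookkeeping entirely inside $\Hcal_X$. One small caveat worth flagging: your conclusion is that $(\Lambda_{\mathrm{SIR}} f)(X)$ is almost surely equal to a $\Gcal_{Y\mid X}$-measurable random variable, which places $\Lambda_{\mathrm{SIR}} f$ in $\Scal_{Y\mid X}$ as literally defined only after completing $\Gcal_{Y\mid X}$ with the $P$-null sets; the paper's adjoint argument avoids this wrinkle because it never needs to certify measurability of any particular $\Lambda_{\mathrm{SIR}} f$ and relies instead on the closedness of $\Scal_{Y\mid X}$ under pointwise (hence $\Hcal_X$-norm) limits.
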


\noindent
Theorem \ref{theo:SIR_range_closure} suggests that we can recover the central class by the range of $\Lambda_{\mathrm{SIR}}$, or equivalently, by the spectral decomposition of $\Lambda_{\mathrm{SIR}} \Lambda_{\mathrm{SIR}}^*$. This is the foundation for our estimation procedure developed in Section \ref{sec:finite_sample}. We call our proposed nonlinear SDR method based on $\Lambda_{\mathrm{SIR}}$ as \emph{metric sliced inverse regression} (MSIR).

\subsection{Discrete response}
\label{sec:discrete_response}

Next, we consider a special case when $Y$ lies in the usual Euclidean space and is discrete. This is the scenario that is perhaps most often encountered in real applications. The main difference between this special case and the general case is that, when $Y$ is discrete, we can obtain direct RKHS representations for the conditional moments, instead of resorting to the unconditional representations as in Lemma \ref{lem:sir_operator}.

More specifically, suppose $\Omega^0_Y = \{ 1, \ldots , K \}$, and let $\pi_k = P(Y = k)$, $\pi_k > 0$ for all $k \in \Omega^0_Y$. By the Riesz representation theorem, there exists the elements $\gamma_{X \mid k} \in \Hcal_X$, $k = 1, \ldots, K$, such that, for any $f \in \Hcal_X$, 
\vspace{-0.05in}
\begin{align*}
\mathrm{E} \{ f(X) \mid Y = k \} - \mathrm{E} \{ f(X) \} = \langle \gamma_{X \mid k}, f \rangle_{\Hcal_X} ,
\end{align*}
The elements $\gamma_{X \mid k}$ can be seen to provide a discrete counterpart of Lemma \ref{lem:sir_operator}(a). We then define the covariance operator,
\begin{eqnarray}\label{eq:discrete_operator}
\Gamma_{XX \mid Y} = \sum_{k=1}^K \pi_k (\gamma_{X \mid k} \otimes \gamma_{X \mid k}) : \Hcal_X \rightarrow \Hcal_X, 
\end{eqnarray}
where $\otimes$ denotes the tensor product. It satisfies that, for any $f, f' \in \Hcal_X$, 
\begin{align*}
\cov [ \mathrm{E} \{ f(X) \mid Y \},  \mathrm{E} \{ f'(X) \mid Y \} ] = \langle f, \Gamma_{XX \mid Y} f' \rangle_{\Hcal_X},
\end{align*}
Consequently, the counterpart of $\Lambda_{\mathrm{SIR}}$ in \eqref{eq:sir_operator_definition} when $Y$ is categorical is, 
\begin{align} \label{eqn:msir-discrete}
\Lambda_{\mathrm{SIR, D}} = \Sigma_{XX}^\dagger \Gamma_{XX \mid Y}.
\end{align}
This operator is well-defined under the following smoothness condition, and the closure of its range provides an unbiased estimator of the central class. 

\begin{assumption}\label{assu:inclusion_2}
Suppose $\mathrm{ran}(\Gamma_{XX \mid Y}) \subseteq \mathrm{ran}(\Sigma_{XX})$.
\end{assumption}

\begin{theorem}
Suppose Assumptions \ref{assu:L2_subset}, \ref{assu:L2_dense_1}, \ref{assu:L2_dense_3}, \ref{assu:inclusion_2} hold. Then $\Lambda_{\mathrm{SIR, D}}$ is a bounded operator, and $\overline{\mathrm{ran}} (\Lambda_{\mathrm{SIR, D}}) \subseteq \Scal_{Y \mid X}$. 
\end{theorem}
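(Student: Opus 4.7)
The plan is to mirror the two-step structure of the proof of Theorem~\ref{theo:SIR_range_closure}, exploiting the finite rank of $\Gamma_{XX\mid Y}$ and the discreteness of $Y$ to argue directly rather than through the adjoint. Boundedness follows as in Section~\ref{sec:msir}: $\Gamma_{XX\mid Y} = \sum_k \pi_k(\gamma_{X\mid k}\otimes\gamma_{X\mid k})$ is finite-rank, and Assumption~\ref{assu:inclusion_2} together with \citet[Theorem~1]{douglas1966majorization} produces a unique bounded operator $C$ on $\Hcal_X$ with $\Gamma_{XX\mid Y} = \Sigma_{XX}C$ and $\mathrm{ran}(C)\subseteq\overline{\mathrm{ran}}(\Sigma_{XX}) = \Hcal_X$; this $C$ coincides with $\Sigma_{XX}^\dagger\Gamma_{XX\mid Y} = \Lambda_{\mathrm{SIR,D}}$, delivering boundedness.

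For the range inclusion, the range of $\Lambda_{\mathrm{SIR,D}}$ lies in the finite-dimensional (hence closed) span $\mathrm{span}\{\eta_1,\ldots,\eta_K\}$ with $\eta_k := \Sigma_{XX}^\dagger\gamma_{X\mid k}$, and $\Scal_{Y\mid X}$ is closed in $\Hcal_X$ because $\Hcal_X$-convergence yields pointwise convergence via the reproducing property, preserving $\Gcal_{Y\mid X}$-measurability of the associated random variable. Hence it suffices to show $\eta_k \in \Scal_{Y\mid X}$ for each $k$. My plan is to identify the random variable $\eta_k(X)$ with the centered density ratio $\tilde r_k(X) - 1$, where $\tilde r_k(X) := P(Y = k \mid X)/\pi_k$. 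From the defining identity $\cov\{\eta_k(X), f(X)\} = \langle\gamma_{X\mid k}, f\rangle_{\Hcal_X} = \E\{f(X) \mid Y = k\} - \E\{f(X)\}$ for $f \in \Hcal_X$, together with the tower identity $\E\{\tilde r_k(X) f(X)\} = \E\{f(X)\mathbf{1}_{\{Y=k\}}\}/\pi_k = \E\{f(X) \mid Y = k\}$, one rewrites the covariance as $\cov\{\eta_k(X) - (\tilde r_k(X) - 1), f(X)\} = 0$ for every $f \in \Hcal_X$; no integrability issue arises since $0 \le \tilde r_k(X) \le 1/\pi_k$ is bounded.

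Assumption~\ref{assu:L2_dense_1} ($\Hcal_X + \mathbb{R}$ dense in $L_2(P_X)$) then extends the vanishing covariance to every $f \in L_2(P_X)$; choosing $f(X) = \eta_k(X) - (\tilde r_k(X) - 1)$ forces $\var\{\eta_k(X) - (\tilde r_k(X) - 1)\} = 0$, and hence $\eta_k(X) = \tilde r_k(X) - 1 + c_k$ almost surely for some constant $c_k$. Because the conditional laws $\{P_{X\mid Y}(\cdot \mid y)\}$ are dominated (Definition~\ref{def:sdr_field}), standard sufficient-statistic theory identifies $\Gcal_{Y\mid X}$ with the $\sigma$-field generated by the posterior $(P(Y = k \mid X))_{k=1}^K$; therefore $\tilde r_k(X)$, and hence $\eta_k(X)$, is $\Gcal_{Y\mid X}$-measurable, yielding $\eta_k \in \Scal_{Y\mid X}$. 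Combining the inclusions gives $\mathrm{ran}(\Lambda_{\mathrm{SIR,D}}) \subseteq \Scal_{Y\mid X}$, and closedness upgrades this to $\overline{\mathrm{ran}}(\Lambda_{\mathrm{SIR,D}}) \subseteq \Scal_{Y\mid X}$.

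The main obstacle I anticipate is turning the almost-sure equality $\eta_k(X) = \tilde r_k(X) - 1 + c_k$ into the claim $\eta_k \in \Scal_{Y\mid X}$ rigorously: $\eta_k$ is an element of the RKHS $\Hcal_X$, defined pointwise on $\Omega_X$, whereas Definition~\ref{def:central-class} demands $\Gcal_{Y\mid X}$-measurability. Since $\Gcal_{Y\mid X} \subseteq \sigma_X$ is a $\sigma$-field on the underlying probability space, the required measurability is that of the random variable $\eta_k(X)$, which follows from its almost-sure equality with the $\Gcal_{Y\mid X}$-measurable variable $\tilde r_k(X) - 1 + c_k$. Assumption~\ref{assu:L2_dense_3} is not strictly needed in this direct argument; it would be used only if one instead adapted the adjoint-based proof of Theorem~\ref{theo:SIR_range_closure} to the discrete setting.
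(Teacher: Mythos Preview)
Your argument is correct and takes a genuinely different route from the paper. The paper supplies no separate proof for this theorem; the intended argument is the discrete adaptation of the proof of Theorem~\ref{theo:SIR_range_closure}: for $f = \Sigma_{XX}m \in \mathrm{ran}(\Sigma_{XX}) \cap \Scal_{Y\mid X}^\bot$ one shows, exactly as there, that $\E\{m(X)\mid Y\} = \E\{m(X)\}$ almost surely, and then uses the identity $\langle f, \Lambda_{\mathrm{SIR,D}}\,g\rangle_{\Hcal_X} = \langle m, \Gamma_{XX\mid Y}\,g\rangle_{\Hcal_X} = \cov[\E\{m(X)\mid Y\},\E\{g(X)\mid Y\}] = 0$ to conclude $f \in \ker(\Lambda_{\mathrm{SIR,D}}^*)$; Assumption~\ref{assu:L2_dense_3} then extends this from $\mathrm{ran}(\Sigma_{XX})\cap\Scal_{Y\mid X}^\bot$ to all of $\Scal_{Y\mid X}^\bot$. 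Your direct approach instead exploits the finite rank: you identify $\mathrm{ran}(\Lambda_{\mathrm{SIR,D}})$ with $\mathrm{span}\{\eta_k\}$ and show each $\eta_k(X)$ coincides almost surely with the centered posterior ratio $P(Y=k\mid X)/\pi_k - 1$, which is $\Gcal_{Y\mid X}$-measurable directly from the conditional independence $Y\indep X\mid\Gcal_{Y\mid X}$ (giving $P(Y=k\mid X) = P(Y=k\mid\Gcal_{Y\mid X})$ a.s.). This buys you an explicit description of the sufficient predictors and, as you observe, makes Assumption~\ref{assu:L2_dense_3} unnecessary. The paper's adjoint route, by contrast, works uniformly across the metric and discrete cases and sidesteps the one technicality you flagged: your last step needs ``$\eta_k(X)$ is a.s.\ equal to a $\Gcal_{Y\mid X}$-measurable variable'' to imply ``$\eta_k \in \Scal_{Y\mid X}$'', which strictly requires $\Gcal_{Y\mid X}$ to contain the relevant $P$-null sets, whereas the adjoint argument terminates in an exact inner-product identity $\langle f,\Lambda_{\mathrm{SIR,D}}\,g\rangle_{\Hcal_X} = 0$ with no such caveat.
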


\section{Sample Estimation}
\label{sec:finite_sample}

In this section, we develop the sample estimator for the proposed metric SIR, first at the operator level, then under a coordinate system, given the i.i.d.\ random sample observations $\{ (X_1, Y_1), \ldots , (X_n, Y_n) \}$ of $(X, Y)$.

\subsection{Estimation at the operator level}

For the general case when the response $Y$ resides in a metric space, we first obtain the sample estimators of the mean elements by $\hat\mu_X = \E_n\{\kappa_X(\cdot, X)\}$, and $\hat\mu_Y = \E_n\{\kappa_Y(\cdot, Y)\}$, where $\E_n $ is the sample mean operator, such that $\E_n \omega = n^{-1} \sum_{i=1}^{n} \omega_i$ for the samples $\omega_1, \ldots, \omega_n$ from $\omega$. We next obtain the sample estimators of the covariance operators $\Sigma_{XX}, \Sigma_{XY}, \Sigma_{YY}$ in \eqref{eq:three_covariance_operators} as,
\begin{align*}
\begin{split}
\hat\Sigma_{XX} & = \E_n[ \{ \kappa_X(\cdot, X) - \hat\mu_X \} \otimes \{ \kappa_X(\cdot, X) - \hat\mu_X \} ], \\
\hat\Sigma_{XY} & = \E_n[ \{ \kappa_X(\cdot, X) - \hat\mu_X \} \otimes \{ \kappa_Y(\cdot, Y) - \hat\mu_Y \} ], \\
\hat\Sigma_{YY} & = \E_n[ \{ \kappa_Y(\cdot, Y) - \hat\mu_Y \} \otimes \{ \kappa_Y(\cdot, Y) - \hat\mu_Y \} ].
\end{split}
\end{align*}
Moreover, we have $\hat\Sigma_{YX} = \hat\Sigma_{XY}^*$. We then obtain the sample estimator of the metric SIR operator $\Lambda_{\mathrm{SIR}}$ in \eqref{eq:sir_operator_definition} as,
\begin{align*}
\hat\Lambda_{\mathrm{SIR}} = (\hat\Sigma_{XX} + \tau_{1} I)^{-1} \hat\Sigma_{XY} (\hat\Sigma_{YY} + \tau_{2} I)^{-1} \hat\Sigma_{YX},
\end{align*}
where we utilize the ridge regularization to estimate the pseudo-inverses, $\tau_1, \tau_2$ are the ridge parameters, and $I$ is the identity operator.  Finally, we estimate the range of $\Lambda_{\mathrm{SIR}}$ through the spectral decomposition of the operator $\hat\Lambda_{\mathrm{SIR}} \hat\Lambda_{\mathrm{SIR}}^*$. Suppose $\hat{f}_{1}, \ldots , \hat{f}_{d}$ are the $d$ leading eigenfunctions of $\hat\Lambda_{\mathrm{SIR}} \hat\Lambda_{\mathrm{SIR}}^*$. Then the estimated sufficient predictors corresponding to the observation $X  \in \Omega_X^0$ are $\hat{f}_{1}(X), \ldots , \hat{f}_{d}(X)$. 

For the special case when $Y$ resides in the usual Euclidean space and is discrete, we obtain the sample estimator of the covariance operator $\Gamma_{XX \mid Y}$ in \eqref{eq:discrete_operator} as,
\vspace{-0.05in}
\begin{align*}
\hat\Gamma_{XX|Y} = \frac{1}{n} \sum_{k = 1}^K n_k (\hat\gamma_{X \mid k} \otimes \hat\gamma_{X \mid k}),
\end{align*}
where $n_{k}$ is the number of samples belonging to the class $k$, $\mathbb{I}(\cdot)$ is the indicator function, and $\hat\gamma_{X \mid k} =  (n/n_k) \E_n  \{\mathbb{I}(Y = k) \kappa_X(\cdot, X) \} - \hat \mu_X$, for $k = 1, \ldots, K$. We then obtain the sample estimator of the metric SIR operator $\Lambda_{\mathrm{SIR, D}}$ in \eqref{eqn:msir-discrete} as, 
\begin{align*}
\hat\Lambda_{\mathrm{SIR, D}} = (\hat\Sigma_{XX} + \tau_{1} I)^{-1} \hat\Gamma_{XX \mid Y}.
\end{align*}
Finally, we estimate the range of $\Lambda_{\mathrm{SIR, D}}$ via the spectral decomposition of $\hat\Lambda_{\mathrm{SIR, D}} \hat\Lambda_{\mathrm{SIR, D}}^*$.

\subsection{Estimation under a coordinate representation}\label{sec:coordinate}

We next develop the estimation procedure under a chosen coordinate system. We divide the procedure into three main steps. We focus on the general case when $Y$ resides in a metric space, and briefly discuss the special case when $Y$ is discrete. 

In Step 1, we choose the kernel function $\kappa_X$ and $\kappa_Y$. There are multiple choices of kernel functions, while we employ the Gaussian kernel throughout our implementation. We use the leave-one-out cross-validation to determine the bandwidth parameters in $\kappa_X$ and $\kappa_Y$, following a similar strategy as in \cite{Lee:2013}. We then compute the Gram matrix $K_X = (\kappa_X(X_i, X_{i'}))_{i,i' = 1}^n \in \mathbb{R}^{n \times n}$, and $K_Y = (\kappa_Y(Y_i, Y_{i'}))_{i,i' = 1}^n \in \mathbb{R}^{n \times n}$, where the kernel functions $\kappa_X$ and $\kappa_Y$ are evaluated under the given metrics $d_X, d_Y$ as in \eqref{eq:kernel_metric}. Let $Q = I - n^{-1} 1 1\trans$ denote the centering matrix, where $1 \in \mathbb{R}^n$ is a vector of ones. We then compute the centered version of the Gram matrices as 
\begin{align} \label{eqn:GxGy}
G_X = Q K_X Q, \quad \textrm{ and } \quad G_Y = Q K_Y Q. 
\end{align}

In Step 2, we compute the coordinate representation of the sample metric SIR operator $\hat\Lambda_{\mathrm{SIR}}$. Toward that end, consider the sample counterpart of the space $\Hcal_X^0$, which is the span of the sample elements, $\hat\Hcal_{X}^0 = \mathrm{span} \big\{ \kappa_X( \cdot , X_i ) \mid i = 1, \ldots , n \big\}$. We impose the following linear independence assumption, which is a mild requirement. When it does not hold, we can simply delete a subset of the elements to obtain a linearly independent set. Alternatively, we can also construct a linearly independent basis via Karhunen-Lo\`eve expansion, see, e.g. \citet{LeeLi2022}.   

\begin{assumption}\label{assu:linearly_independent}
The elements $\kappa_X( \cdot , X_i )$, $i = 1, \ldots , n$, are linearly independent.  
\end{assumption}

\noindent
Under Assumption \ref{assu:linearly_independent}, the elements $\kappa_X( \cdot , X_i )$, $i = 1, \ldots , n$, form a basis for $\hat\Hcal_{X}^0$ and, given an arbitrary member $f \in \hat\Hcal_{X}^0$, we define its coordinate $[f] \in \mathbb{R}^n$ as the vector of its coefficients under this basis. As such, for any $f \in \hat\Hcal_{X}^0$ and $X \in \Omega_X^0$, $f(X) = [f]\trans k_X(X)$, where $k_X(X) = (\kappa_X(X , X_1), \ldots , \kappa_X(X , X_n))\trans$. In addition, we take the inner product of $\hat\Hcal_{X}^0$ to be the bilinear form, $(f, f') \mapsto \langle f, f' \rangle_{\hat\Hcal_{X}^0} = [f]\trans K_X [f']$, for $f, f' \in \hat\Hcal_{X}^0$, and the Gram matrix $K_X$ is ensured to be positive definite by Assumption~\ref{assu:linearly_independent}. Analogously, consider the sample counterpart of the space $\Hcal_X$, which is the span of the centered sample elements, $\hat\Hcal_{X} = \mathrm{span} \big\{ \kappa_X( \cdot, X_i) - \hat\mu_{X} \mid i = 1, \ldots , n \big\}$. We construct the sample spaces $\hat\Hcal_{Y}^0$ and $\hat\Hcal_{Y}$ similarly. 

Correspondingly, following \citet{Fukumizu:2009}, the coordinates of the sample covariance operators $\hat \Sigma_{XX}$, $\hat \Sigma_{XY}$, $\hat \Sigma_{YX}$, $\hat \Sigma_{YY}$ are, 
\vspace{-0.05in}
\begin{align*}
[\hat \Sigma_{XX}] = n^{-1}G_X, \quad [\hat \Sigma_{XY}] = n^{-1}G_Y, \quad [\hat \Sigma_{YX}] = n^{-1}G_X, \quad [\hat \Sigma_{YY}] = n^{-1}G_Y,
\end{align*}
where $G_X, G_Y$ are as defined in \eqref{eqn:GxGy}. We also clarify that, the above coordinate representation seems to suggest that $\hat{\Sigma}_{YX}$ does not depend on $Y$, which is not the case. Actually, $\hat \Sigma_{XX}$ and $\hat \Sigma_{YX}$ share the same coordinate, which is $n^{-1}G_X$, but they involve two different sets of bases, as $\hat \Sigma_{XX}$ and $\hat \Sigma_{YX}$ have different range spaces. For simplicity, we drop the involvement of the underlying bases in the coordinate bracket notation. But we remind that $\hat{\Sigma}_{YX}$ depends on $Y$ through the underlying bases. A similar discussion applies to  $\hat{\Sigma}_{XY}$ too.

We then obtain the coordinate representation of $\hat \Lambda_{\mathrm{SIR}}$ in the next lemma. Its proof follows immediately by the definition of $\hat \Lambda_{\mathrm{SIR}}$,  and is thus omitted.

\begin{lemma}\label{lem:sir_coordinate_1}
The metric SIR operator $\hat \Lambda_{\mathrm{SIR}}$ has the coordinate representation, 
\begin{align} \label{eqn:coordinate_mSIR}
[\hat \Lambda_{\mathrm{SIR}}] = G_X^\dagger G_Y G_Y^\dagger G_X,
\end{align}
where $^\dagger$ denotes the Moore-Penrose pseudo-inverse of a matrix.
\end{lemma}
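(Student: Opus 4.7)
The plan is to verify the claimed identity by direct substitution of the coordinate representations of the sample covariance operators into the definition of $\hat\Lambda_{\mathrm{SIR}}$. I would take as given the four identities $[\hat\Sigma_{XX}] = n^{-1} G_X$, $[\hat\Sigma_{XY}] = n^{-1} G_Y$, $[\hat\Sigma_{YY}] = n^{-1} G_Y$, and $[\hat\Sigma_{YX}] = n^{-1} G_X$, which are tabulated immediately before the lemma. Each of these follows from unfolding the defining expression of the relevant operator against a generic element of $\hat\Hcal_X$ or $\hat\Hcal_Y$ written in the basis $\{\kappa_X(\cdot, X_i) - \hat\mu_X\}_{i=1}^n$ or $\{\kappa_Y(\cdot, Y_j) - \hat\mu_Y\}_{j=1}^n$, using the Gram-matrix representation of the corresponding inner product on $\hat\Hcal_X$ and $\hat\Hcal_Y$.

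Next, I would pass to the coordinate representation of the two regularized inverses appearing in $\hat\Lambda_{\mathrm{SIR}}$. Since $[\hat\Sigma_{XX} + \tau_1 I] = n^{-1} G_X + \tau_1 I_n$, its coordinate inverse is $(n^{-1} G_X + \tau_1 I_n)^{-1}$, which converges, as $\tau_1 \to 0^+$, to $(n^{-1} G_X)^\dagger = n\, G_X^\dagger$ on the range of $G_X$; the same reasoning applied to $\hat\Sigma_{YY}$ yields $n\, G_Y^\dagger$. Equivalently, one may interpret the coordinate identity in the lemma as describing the unregularized operator $\hat\Sigma_{XX}^\dagger \hat\Sigma_{XY} \hat\Sigma_{YY}^\dagger \hat\Sigma_{YX}$ directly, consistent with the ridge-regularized form in the $\tau \to 0$ limit.

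Finally, using the fact that the coordinate representation of a composition of operators is the product of the individual coordinate representations (with the intermediate bases matching in the chain $\hat\Hcal_X \to \hat\Hcal_Y \to \hat\Hcal_Y \to \hat\Hcal_X \to \hat\Hcal_X$), I would multiply the four factors to obtain
\[
[\hat\Lambda_{\mathrm{SIR}}] = (n\, G_X^\dagger)(n^{-1} G_Y)(n\, G_Y^\dagger)(n^{-1} G_X) = G_X^\dagger G_Y G_Y^\dagger G_X,
\]
with the four factors of $n$ cancelling pairwise. The only mildly subtle step is the alignment between the operator-level Moore--Penrose pseudo-inverse on $\hat\Hcal_X$ and the matrix Moore--Penrose pseudo-inverse of the coefficient matrix; this works out cleanly because the same Gram matrix $G_X$ simultaneously represents both the operator $n\,\hat\Sigma_{XX}$ and the inner product on $\hat\Hcal_X$ (and analogously on the $Y$ side), so the two notions of pseudo-inverse line up. Since everything beyond this observation is a mechanical bookkeeping exercise, this is presumably why the authors declared the proof immediate and omitted it.
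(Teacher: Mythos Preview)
Your proposal is correct and matches the paper's approach: the paper explicitly states that the proof ``follows immediately by the definition of $\hat\Lambda_{\mathrm{SIR}}$, and is thus omitted,'' and your direct substitution of the tabulated coordinates $[\hat\Sigma_{XX}], [\hat\Sigma_{XY}], [\hat\Sigma_{YY}], [\hat\Sigma_{YX}]$ followed by cancellation of the factors of $n$ is precisely that immediate verification.
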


To improve numerical stability, we replace the pseudo-inverse $G_X^\dagger$ in Lemma~\ref{lem:sir_coordinate_1} with its ridge-regularized counterpart $\{ G_X + \tau_1 I_n \}^{-1} $, where $\tau_1$ is taken to be $c \times \phi_1(G_X)$, $\phi_1(\cdot)$ is the largest eigenvalue of the designated matrix, and $c = 0.2$. A similar procedure was also employed in \citet{LeeLi2022}. Similarly, we replace  $G_Y^\dagger$ by $\{ G_Y + \tau_2 I_n \}^{-1} $ with $\tau_2 = c \times \phi_1(G_Y)$.
 
In Step 3, we estimate the range of $\hat \Lambda_{\mathrm{SIR}}$ through the eigen-decomposition of its coordinate in \eqref{eqn:coordinate_mSIR}. Letting $v_1, \ldots, v_d$ denote the $d$ leading eigenvectors of $[\hat \Lambda_{\mathrm{SIR}}] [\hat \Lambda_{\mathrm{SIR}}]\trans$, the estimated sufficient predictors corresponding to an observation $X \in \Omega_X^0$ are $v_1\trans Q k_X(X), \ldots v_d\trans Q k_X(X)$, where $k_X(X) = (\kappa_X(X , X_1), \ldots , \kappa_X(X , X_n))\trans$. Alternatively, one can also use the eigenvectors of the matrix $[\hat \Lambda_{\mathrm{SIR}}]$.

We remark that, the computational complexity of our proposed method is of the order $\mathcal{O}(n^3)$. When the sample size $n$ is huge, the computation can be intensive. For such a case, we propose an alternative estimation strategy similar to that of \cite{hung2019sufficient}. That is, we first divide all the sample observations into $Q$ disjoint subsets $\mathcal{I}_1, \ldots , \mathcal{I}_Q$. We then estimate the sufficient predictors given each subset $\mathcal{I}_q$, for $q = 1, \ldots, Q$. To accommodate for possible discrepancy in the signs of the resulting eigenvectors, we choose their signs such that, for each $j = 1, \ldots , d$, the sum $\sum_{q, q' = 1}^Q v_{j, q}\trans v_{j, q'} $ is maximized, where $v_{j, q}$ is the $j$th eigenvector of $[\hat \Lambda_{\mathrm{SIR}}] [\hat \Lambda_{\mathrm{SIR}}]\trans$ computed based on the $q$th subset $\mathcal{I}_q$. We then average the estimated sufficient predictors over all $Q$ subsets to produce the final estimate for the full samples.

For the special case when $Y$ resides in the usual Euclidean space and is discrete, the coordinate representation of $\gamma_{X \mid k}$ is $[\hat\gamma_{X \mid k}] = (1/n_k) 1_k - (1/n) 1$, where the $i$th element of the vector $1_k \in \mathbb{R}^n$ is the indicator $\mathbb{I}(Y_i = k)$, $i = 1, \ldots , n$. Correspondingly, the coordinate representation of $\hat \Lambda_{\mathrm{SIR,D}}$ is, 
\begin{align*}
[\hat \Lambda_{\mathrm{SIR, D}}] = G_X^\dagger Q \left(   \sum_{k=1}^K \frac{1}{n_k} 1_k 1_k\trans  \right) Q G_X.
\end{align*}

Finally, we briefly comment on the problem of selecting the reduced dimension $d$ in SDR. There have been a number of information criterion-based selection proposals for SDR of the Euclidean data \citep{zhu2006sliced, luo2009contour, xia2015consistently}. We expect a similar information criterion is applicable for our metric SIR as well, while we leave the full investigation as future research.

\section{Asymptotic Theory}
\label{sec:asymptotics}

In this section, we establish the convergence rate of the proposed metric SIR estimator at the operator level for both the general $Y$ and categorical $Y$ settings. 

We begin with some regularity conditions. 

\begin{assumption}\label{assu:continuous}
Suppose the kernel functions $\kappa_X$ and $\kappa_Y$ are continuous.
\end{assumption}

\begin{assumption}\label{assu:fourth_moments}
Suppose $\mathrm{E} \{ \kappa_X (X, X)^2 \} < \infty$, and $\mathrm{E} \{ \kappa_Y (Y, Y)^2 \} < \infty$.
\end{assumption}

\begin{assumption}\label{assu:asymptotics_regularity}
Suppose $\mathrm{ran}(\Sigma_{YX}) \subseteq \mathrm{ran}(\Sigma_{YY}^2)$, and $\mathrm{ran}(\Sigma_{XY}) \subseteq \mathrm{ran}(\Sigma_{XX}^2)$.
\end{assumption}

\noindent
Assumption \ref{assu:continuous} is quite mild, and together with the separability of the metric spaces $\Omega^0_X, \Omega^0_Y$, it ensures that the RKHS $\Hcal_X, \Hcal_Y$ are separable \citep{hein2004kernels}, which in turn ensures that $\Hcal_X, \Hcal_Y$ admit countable orthonormal bases. Assumption~\ref{assu:fourth_moments} is analogous to the requirement that a random variable has a finite fourth moment, and is reasonable.  Assumption~\ref{assu:asymptotics_regularity} can be seen as a stronger version of Assumption \ref{assu:inclusion}; that is, in comparison with Assumption \ref{assu:inclusion}, the mapping of $\Sigma_{XY}$ needs to concentrate even more on the leading eigen-spaces of $\Sigma_{XX}$ and $\Sigma_{YY}$. This, again, can be understood as a smoothness condition.

In our sample estimation, we employ the ridge regularization for the pseudo-inverses. For simplicity, in our theoretical analysis, we suppose the ridge parameters $\tau_1 = \tau_2=\tau$, and $\tau$ approaches zero as the sample size $n$ diverges. Denote the operator norm of a linear operator $A:\Hcal \to \Hcal'$ as $\|A\|_{\mathrm{OP}} = \sup\{\|Af\|_{\Hcal'}: \|f\|_{\Hcal} =1 \}$. The next theorem establishes the convergence of $\hat\Lambda_{\mathrm{SIR}}$ in terms of the operator norm for the general response case. 

\begin{theorem}\label{theo:asymptotics_theorem_1}
Suppose Assumptions \ref{assu:continuous} to \ref{assu:asymptotics_regularity} hold. Then, as $n \rightarrow \infty$, 
\begin{align*}
\left\| \hat\Lambda_{\mathrm{SIR}} -  \Lambda_{\mathrm{SIR}} \right\|_{\mathrm{OP}} = \mathcal{O}_p\left( \tau + \frac{1}{\tau \sqrt{n}} \right). 
\end{align*}
\end{theorem}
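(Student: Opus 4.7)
My plan is to split the error into a deterministic bias and a stochastic variance term by introducing the population-level regularized operator
\[
\Lambda_\tau \;=\; (\Sigma_{XX}+\tau I)^{-1}\,\Sigma_{XY}\,(\Sigma_{YY}+\tau I)^{-1}\,\Sigma_{YX},
\]
and writing $\hat\Lambda_{\mathrm{SIR}}-\Lambda_{\mathrm{SIR}} = (\hat\Lambda_{\mathrm{SIR}}-\Lambda_\tau) + (\Lambda_\tau-\Lambda_{\mathrm{SIR}})$. I will show the first summand is $\mathcal{O}_p\{1/(\tau\sqrt{n})\}$ and the second is $\mathcal{O}(\tau)$. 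Abbreviate $A=\Sigma_{XX}+\tau I$, $\hat A=\hat\Sigma_{XX}+\tau I$, and similarly $B,\hat B$ for the $Y$ side.

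For the bias, Assumption~\ref{assu:asymptotics_regularity} together with the Douglas range-inclusion theorem gives bounded operators $T_1,T_2$ with $\Sigma_{XY}=\Sigma_{XX}^2 T_1$ and $\Sigma_{YX}=\Sigma_{YY}^2 T_2$. Since $\mathcal{H}_X=\overline{\mathrm{ran}}(\Sigma_{XX})$, one has $\Sigma_{XX}^\dagger\Sigma_{XY}=\Sigma_{XX}T_1$, and analogously on the $Y$ side, so $\Lambda_{\mathrm{SIR}}=\Sigma_{XX}T_1\Sigma_{YY}T_2$. The identity $A^{-1}\Sigma_{XX}^2-\Sigma_{XX}=-\tau A^{-1}\Sigma_{XX}$, together with the functional-calculus bound $\|A^{-1}\Sigma_{XX}\|_{\mathrm{OP}}\le 1$, yields $\|A^{-1}\Sigma_{XX}^2-\Sigma_{XX}\|_{\mathrm{OP}}\le\tau$, and similarly for $B$. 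A two-term telescoping decomposition of $\Lambda_\tau-\Lambda_{\mathrm{SIR}}$ then bounds the bias by $\mathcal{O}(\tau)$, since all remaining factors $T_1,T_2,\Sigma_{XX},\Sigma_{YY}$ are bounded.

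For the variance, the standard Hilbert–Schmidt central limit theorem in separable RKHSs, which is available by Assumptions \ref{assu:continuous} and \ref{assu:fourth_moments}, supplies the baseline rates $\|\hat\Sigma_{XX}-\Sigma_{XX}\|_{\mathrm{OP}}$, $\|\hat\Sigma_{XY}-\Sigma_{XY}\|_{\mathrm{OP}}$, $\|\hat\Sigma_{YY}-\Sigma_{YY}\|_{\mathrm{OP}}=\mathcal{O}_p(n^{-1/2})$. Using $\hat A^{-1}-A^{-1}=\hat A^{-1}(\Sigma_{XX}-\hat\Sigma_{XX})A^{-1}$, I would first show the crucial auxiliary bound
\[
\bigl\|\hat A^{-1}\hat\Sigma_{XY}-A^{-1}\Sigma_{XY}\bigr\|_{\mathrm{OP}} \;=\; \mathcal{O}_p\!\left(\frac{1}{\tau\sqrt n}\right),
\]
by decomposing this difference as $\hat A^{-1}(\hat\Sigma_{XY}-\Sigma_{XY})+\hat A^{-1}(\Sigma_{XX}-\hat\Sigma_{XX})A^{-1}\Sigma_{XY}$ and noting that $\|A^{-1}\Sigma_{XY}\|_{\mathrm{OP}}=\|A^{-1}\Sigma_{XX}^2 T_1\|_{\mathrm{OP}}\le\|\Sigma_{XX}\|_{\mathrm{OP}}\|T_1\|_{\mathrm{OP}}=\mathcal{O}(1)$, which is the key use of Assumption~\ref{assu:asymptotics_regularity}. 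The analogous statement holds for $\hat B^{-1}\hat\Sigma_{YX}-B^{-1}\Sigma_{YX}$, which also implies $\|\hat B^{-1}\hat\Sigma_{YX}\|_{\mathrm{OP}}=\mathcal{O}_p(1)$ (since $\|B^{-1}\Sigma_{YX}\|_{\mathrm{OP}}=\mathcal{O}(1)$ by the same smoothness argument, and the perturbation is lower order when $\tau\sqrt n\to\infty$).

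I would then finish by the product-telescoping identity
\[
\hat\Lambda_{\mathrm{SIR}}-\Lambda_\tau \;=\; \bigl(\hat A^{-1}\hat\Sigma_{XY}-A^{-1}\Sigma_{XY}\bigr)\hat B^{-1}\hat\Sigma_{YX} + A^{-1}\Sigma_{XY}\bigl(\hat B^{-1}\hat\Sigma_{YX}-B^{-1}\Sigma_{YX}\bigr),
\]
each term being $\mathcal{O}_p\{1/(\tau\sqrt n)\}\cdot\mathcal{O}_p(1)$. Combining with the bias bound gives the stated rate. The main obstacle is precisely to avoid the naive $\mathcal{O}_p\{1/(\tau^3\sqrt n)\}$ bound that would arise from bounding $\|\hat B^{-1}\|_{\mathrm{OP}}$ and $\|\hat A^{-1}-A^{-1}\|_{\mathrm{OP}}$ separately; the source inclusion in Assumption~\ref{assu:asymptotics_regularity} must be exploited to absorb one power of $\tau^{-1}$ into the boundedness of $A^{-1}\Sigma_{XY}$ and $B^{-1}\Sigma_{YX}$.
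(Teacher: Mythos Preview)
Your proposal is correct and follows essentially the same route as the paper. The paper also proves root-$n$ Hilbert--Schmidt consistency of the sample covariance operators, uses the resolvent identity $\hat A^{-1}-A^{-1}=\hat A^{-1}(A-\hat A)A^{-1}$ together with the Douglas factorization $\Sigma_{XY}=\Sigma_{XX}^2 C$ from Assumption~\ref{assu:asymptotics_regularity} to obtain both the $\mathcal{O}(\tau)$ bias bound and the uniform-in-$\tau$ bound $\|A^{-1}\Sigma_{XY}\|_{\mathrm{OP}}=\mathcal{O}(1)$, and then (implicitly) combines the two halves by a product telescope; the only cosmetic difference is that the paper merges bias and variance at the block level, showing directly $\|\hat A^{-1}\hat\Sigma_{XY}-\Sigma_{XX}^\dagger\Sigma_{XY}\|_{\mathrm{OP}}=\mathcal{O}_p(\tau+1/(\tau\sqrt n))$, rather than introducing the intermediate $\Lambda_\tau$.
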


For the special case of $Y$ being categorical, we replace the smoothness condition of Assumption \ref{assu:asymptotics_regularity} with the following counterpart.  

\begin{assumption}\label{assu:asymptotics_regularity_2}
Suppose $\mathrm{ran}(\Gamma_{XX \mid Y}) \subseteq \mathrm{ran}(\Sigma_{XX}^2)$.
\end{assumption}

\begin{theorem}\label{theo:asymptotics_theorem_2}
Suppose Assumptions \ref{assu:continuous}, \ref{assu:fourth_moments}, and \ref{assu:asymptotics_regularity_2} hold. Then, as $n \rightarrow \infty$, 
\begin{align*}
\left\| \hat\Lambda_{\mathrm{SIR, D}}-  \Lambda_{\mathrm{SIR, D}} \right\|_{\mathrm{OP}} = \mathcal{O}_p\left( \tau + \frac{1}{\tau \sqrt{n}} \right). 
\end{align*}
\end{theorem}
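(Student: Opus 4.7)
The plan is to mirror the argument for Theorem~\ref{theo:asymptotics_theorem_1} but exploit the simpler algebra afforded by the discrete response. First I would split into a stochastic error and a regularization bias:
\begin{align*}
\hat\Lambda_{\mathrm{SIR,D}} - \Lambda_{\mathrm{SIR,D}}
 &= \bigl[(\hat\Sigma_{XX}+\tau I)^{-1}\hat\Gamma_{XX\mid Y} - (\Sigma_{XX}+\tau I)^{-1}\Gamma_{XX\mid Y}\bigr] \\
 &\quad + \bigl[(\Sigma_{XX}+\tau I)^{-1}\Gamma_{XX\mid Y} - \Sigma_{XX}^\dagger \Gamma_{XX\mid Y}\bigr].
\end{align*}

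For the bias term, Assumption~\ref{assu:asymptotics_regularity_2} together with the Douglas majorization theorem produces a bounded operator $B:\Hcal_X\to\Hcal_X$ with $\Gamma_{XX\mid Y}=\Sigma_{XX}^2 B$; then $\Sigma_{XX}^\dagger\Gamma_{XX\mid Y}=\Sigma_{XX}B$, and a direct manipulation gives
\[
(\Sigma_{XX}+\tau I)^{-1}\Sigma_{XX}^2 B - \Sigma_{XX}B = -\tau(\Sigma_{XX}+\tau I)^{-1}\Sigma_{XX}B.
\]
Since $\|(\Sigma_{XX}+\tau I)^{-1}\Sigma_{XX}\|_{\mathrm{OP}}\le 1$ by spectral calculus, the bias is $\mathcal{O}(\tau)$.

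For the stochastic term I would apply the resolvent identity $(\hat\Sigma_{XX}+\tau I)^{-1}-(\Sigma_{XX}+\tau I)^{-1} = (\hat\Sigma_{XX}+\tau I)^{-1}(\Sigma_{XX}-\hat\Sigma_{XX})(\Sigma_{XX}+\tau I)^{-1}$ to rewrite it as
\[
(\hat\Sigma_{XX}+\tau I)^{-1}\bigl(\hat\Gamma_{XX\mid Y}-\Gamma_{XX\mid Y}\bigr) + (\hat\Sigma_{XX}+\tau I)^{-1}(\Sigma_{XX}-\hat\Sigma_{XX})(\Sigma_{XX}+\tau I)^{-1}\Gamma_{XX\mid Y}.
\]
Using $\|(\hat\Sigma_{XX}+\tau I)^{-1}\|_{\mathrm{OP}}\le 1/\tau$ and, for the second summand, the identity $(\Sigma_{XX}+\tau I)^{-1}\Gamma_{XX\mid Y}=(\Sigma_{XX}+\tau I)^{-1}\Sigma_{XX}^2 B$ whose operator norm is bounded uniformly in $\tau$, both summands are controlled once one shows $\|\hat\Sigma_{XX}-\Sigma_{XX}\|_{\mathrm{OP}}$ and $\|\hat\Gamma_{XX\mid Y}-\Gamma_{XX\mid Y}\|_{\mathrm{OP}}$ are $\mathcal{O}_p(n^{-1/2})$. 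The first is a standard RKHS covariance concentration argument under Assumption~\ref{assu:fourth_moments}; these are likely already established (explicitly or implicitly) in the proof of Theorem~\ref{theo:asymptotics_theorem_1}.

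The main obstacle will be the concentration of $\hat\Gamma_{XX\mid Y}=\sum_k (n_k/n)\,\hat\gamma_{X\mid k}\otimes\hat\gamma_{X\mid k}$ around $\Gamma_{XX\mid Y}=\sum_k \pi_k\,\gamma_{X\mid k}\otimes\gamma_{X\mid k}$, since $\hat\gamma_{X\mid k}=(n/n_k)\E_n\{\mathbb{I}(Y=k)\kappa_X(\cdot,X)\}-\hat\mu_X$ involves the random ratio $n/n_k$. I would handle this by writing $\hat\gamma_{X\mid k}-\gamma_{X\mid k}$ as $(n/n_k-1/\pi_k)\E_n\{\mathbb{I}(Y=k)\kappa_X(\cdot,X)\}$ plus centered empirical averages plus $\hat\mu_X-\mu_X$, invoking the Hoeffding-type rate $n_k/n=\pi_k+\mathcal{O}_p(n^{-1/2})$ (together with $\pi_k>0$) to control the ratio, and applying the Hilbert-space law of large numbers with Assumption~\ref{assu:fourth_moments} to bound the two empirical averages in $\Hcal_X$-norm at rate $n^{-1/2}$. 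Summing over the finite class labels $k=1,\dots,K$ and combining the three pieces via the product rule $\|a\otimes a-b\otimes b\|_{\mathrm{OP}}\le \|a-b\|(\|a\|+\|b\|)$ yields the desired $\mathcal{O}_p(n^{-1/2})$ rate and completes the proof.
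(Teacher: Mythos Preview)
Your proposal is correct and follows the same architecture as the paper's proof: split $\hat\Lambda_{\mathrm{SIR,D}}-\Lambda_{\mathrm{SIR,D}}$ into a regularization bias and a stochastic error, handle the bias via Douglas majorization under Assumption~\ref{assu:asymptotics_regularity_2} exactly as you describe, and handle the stochastic part via the resolvent identity together with $\|\hat\Sigma_{XX}-\Sigma_{XX}\|=\mathcal{O}_p(n^{-1/2})$ (the paper's Lemma~\ref{lem:technical_1}) and $\|\hat\Gamma_{XX\mid Y}-\Gamma_{XX\mid Y}\|=\mathcal{O}_p(n^{-1/2})$, then finish with the same tensor-product bound $\|a\otimes a-b\otimes b\|\le\|a-b\|(\|a\|+\|b\|)$ summed over the $K$ classes.

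The one substantive difference lies in how $\|\hat\gamma_{X\mid k}-\gamma_{X\mid k}\|_{\Hcal_X}=\mathcal{O}_p(n^{-1/2})$ is obtained. The paper computes $\E\|\hat\gamma_{X\mid k}-\gamma_{X\mid k}\|_{\Hcal_X}^2$ directly, conditioning on the indicators $\mathbb{I}(Y_i=k)$ and invoking a specialized bound on reciprocal binomial moments to control terms of the form $\E(1/N_k^2\mid \mathbb{I}_{1k}=1)$. Your algebraic decomposition, isolating the scalar factor $n/n_k-1/\pi_k$ and two centered empirical means in $\Hcal_X$, is more elementary and sidesteps the reciprocal-moment machinery entirely; the paper's second-moment computation is heavier but delivers an explicit $L^2$ rate rather than only a rate in probability.
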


\noindent
Theorems \ref{theo:asymptotics_theorem_1} and \ref{theo:asymptotics_theorem_2} suggest that our metric SIR estimator is consistent. Its convergence rate consists of two parts. The first part is due to the ridge regularization, and the second part represents the convergence of the sample operators to their population counterparts. If $\tau= n^{-\beta} $ for some constant $\beta > 0$, then the convergence rate becomes $n^{-\beta} + n^{\beta - 1/2} $, implying that the best possible convergence rate given by our result is $\mathcal{O}(n^{-1/4})$, achieved when $\beta = 1/4 $. We remark this is the same as the rate obtained by \citet{Li:2017} in nonlinear SDR for functional data.

\section{Numerical Studies}
\label{sec:examples}

In this section, we study the empirical performance of our proposed metric sliced inverse regression (MSIR), under difference choices of distance metrics. We also compare with the nonlinear SIR method of \citet[GSIR]{Lee:2013}. Although GSIR was originally formulated through the Euclidean geometry, it can be easily extended to incorporate an arbitrary distance metric.

\subsection{Torus manifold data}

As the first example, we consider a two-dimensional torus as the predictor, while we simulate the response using different distance metrics. A torus is best visualized as a unit square $[0, 1]^2$ for which the opposite edges have been ``glued together''. We consider two different generative models.
\begin{itemize}
\item[] Model 1: $Y_i = d_G\{X_i, (0.5, 0.5)\trans \} + \varepsilon_i$;
\item[] Model 2: $Y_i = d_G\{X_i, (1, 1)\trans \} + \varepsilon_i$,
\end{itemize}
where the 2-dimensional predictor $X_i$ is uniformly distributed in $[0, 1]^2$,  the error term $\varepsilon_i$ is drawn from a normal distribution with mean zero and variance $\sigma^2$, and $d_G$ denotes the geodesic distance. Since the point $(0.5, 0.5)\trans$ is in the middle of the unit square, we have $d_G\{X_i, (0.5, 0.5)\trans \} = d_E\{X_i, (0.5, 0.5)\trans \} $, where $d_E$ denotes the Euclidean distance. Consequently, in Model 1, the true relation between the response and the predictor is a smooth function of the Euclidean distance between the predictor and the center point of the square, and we expect the two distance functions to perform similarly under Model 1. The same is not true for Model 2, however, where the reference point $(1, 1)\trans$ lies at the corner of the square. This means that the true regression relationship is not a smooth function of the Euclidean distance, but it is so for the geodesic distance, making the geodesic distance more favorable under Model 2. For both models, we consider two sample sizes $n = 250, 500$, and two noise levels $\sigma = 0.05, 0.10$. We further divide the data into 80\% training samples, and 20\% testing samples. We consider two distance metrics, the geodesic distance and the Euclidean distance. 

\begin{table}[t!]
\centering
\caption{The torus data example: the average distance correlation (with the standard deviation in the parenthesis) between the response and estimated sufficient predictors.}
\vspace{1em}
\begin{tabular}{r|cc|cc} \hline
Model 1 & \multicolumn{2}{c|}{$n = 250$} & \multicolumn{2}{c}{$n = 500$} \\ \cline{2-5}
 & $ \sigma = 0.05 $ & $ \sigma = 0.10 $ & $ \sigma = 0.05 $ & $ \sigma = 0.10 $ \\ 
  \hline
MSIR $d_G$ & 0.912 (0.025) & 0.766 (0.058) & 0.911 (0.018) & 0.777 (0.038) \\ 
  GSIR $d_G$ & 0.719 (0.082) & 0.611 (0.088) & 0.715 (0.071) & 0.599 (0.081) \\
  \hline
  MSIR $d_E$ & 0.926 (0.021) & 0.779 (0.060) & 0.926 (0.014) & 0.790 (0.037) \\ 
  GSIR $d_E$ & 0.654 (0.092) & 0.563 (0.091) & 0.646 (0.083) & 0.552 (0.082) \\ 
   \hline
Model 2 & \multicolumn{2}{c|}{$n = 250$} & \multicolumn{2}{c}{$n = 500$} \\ \cline{2-5}
 & $ \sigma = 0.05 $ & $ \sigma = 0.10 $ & $ \sigma = 0.05 $ & $ \sigma = 0.10 $ \\ 
  \hline
MSIR $d_G$ & 0.912 (0.025) & 0.784 (0.054) & 0.913 (0.017) & 0.775 (0.040) \\ 
  GSIR $d_G$ & 0.726 (0.079) & 0.623 (0.094) & 0.724 (0.073) & 0.616 (0.082) \\
  \hline
  MSIR $d_E$ & 0.841 (0.046) & 0.729 (0.067) & 0.845 (0.032) & 0.722 (0.046) \\ 
  GSIR $d_E$ & 0.602 (0.087) & 0.526 (0.098) & 0.587 (0.084) & 0.509 (0.085) \\ 
   \hline
\end{tabular}
\label{tab:torus_1}
\end{table}

Table \ref{tab:torus_1} reports the distance correlation between the response and the first two estimated sufficient predictors evaluated on the testing samples and averaged over 200 data replications. It is seen that the proposed MSIR outperforms the competing GSIR, by achieving a higher distance correlation and a smaller standard error. Moreover, the Euclidean metric is slightly better suited to Model 1, where the toroidal geometry plays no role, while the geodesic metric is considerably better for Model 2, where the toroidal geometry plays a crucial role. The increased sample size mostly helps to reduce the standard error of the estimator. Figure \ref{fig:torus_1} further provides a visualization of the estimated sufficient predictors for a single data replication under Model 2 with $n=500$ and $\sigma=0.05$. It agrees with the qualitative patterns observed in Table \ref{tab:torus_1} that MSIR produces more informative sufficient predictors than GSIR.

\begin{figure}[t!]
\centering
\includegraphics[width=\textwidth, height=4.2in]{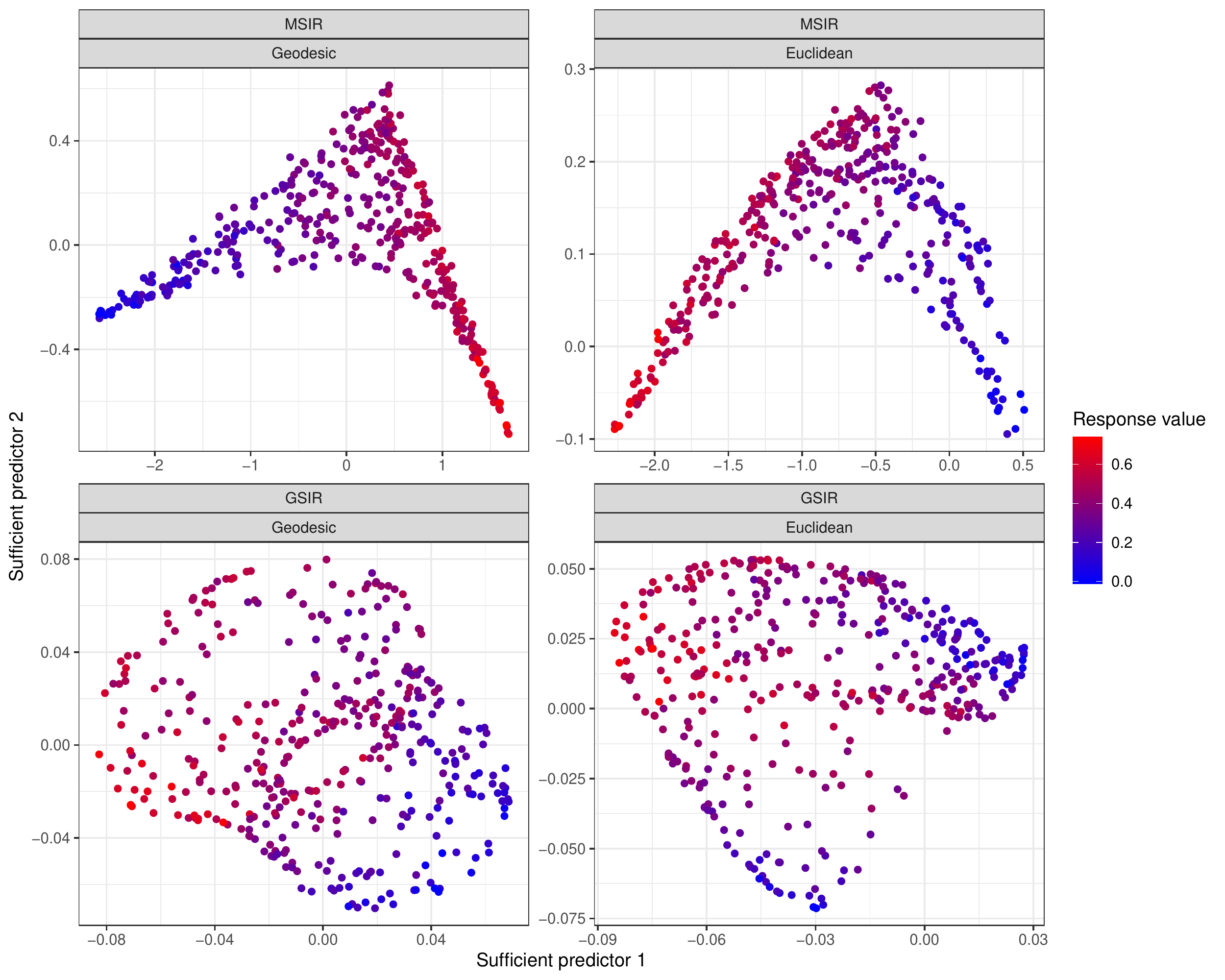}
\caption{The torus data example:  the sufficient predictors under two SDR methods and two distance metrics.}
\vspace{1em}
\label{fig:torus_1}
\end{figure}

\subsection{Positive definite matrix data}
\label{subsec:psd}

As the second example, we consider a positive definite matrix data example from a neuroimaging based autism study \citep{DiMartino2014}. Autism is an increasingly prevalent neurodevelopmental disorder, characterized by symptoms such as social difficulties, communication deficits, stereotyped behaviors and cognitive delays \citep{Rudie2013}. The dataset consists of $n=795$ subjects, among whom 362 were diagnosed with autism, and the rest healthy controls. For each subject, a resting-state functional magnetic resonance imaging (fMRI) scan was obtained, which measures the intrinsic functional architecture of the brain through the correlated synchronizations of brain systems. The corresponding brain functional connectivity network has been shown to alter under different disorders or during different brain developmental stages. Such alterations contain crucial insights of both disorder pathology and development of the brain \citep{Fox2010}. It is thus of great scientific importance to understand the association between the autism status and the brain connectivity network, and our goal is to produce sufficient predictors to separate the autism patients from those healthy controls. 

We follow the data processing procedure of \citet{sun2017store}, and summarize the brain connectivity network for each subject as a $116 \times 116$ correlation matrix, corresponding to the synchronizations of 116 brain regions-of-interest under the commonly used Anatomical Automatic Labeling atlas \citep{TzourioMazoyer2002}. Moreover, most of the observed connectivity matrices of this data are numerically rank-deficit, with the typical numerical rank ranging from 60 to 80. As such, we employ common principal components analysis, and project the connectivity matrices to the space of the top 30 common principal components, such that the minimal eigenvalue is at least $10^{-4}$ for each resulting matrix. 

We consider six distance metrics between two positive definite matrices $M_1$ and $M_2$. These include the affine invariant metric, $d_A(M_1, M_2) = \| \mathrm{Log} (M_1^{-1/2} M_2 M_1^{-1/2}) \|_{F}$, where $\mathrm{Log}(\cdot)$ denotes the matrix logarithm, and $\| \cdot \|_{F}$ the Frobenius norm, the log-Euclidean metric, $d_{LE}(M_1, M_2) = \| \mathrm{Log}(M_1) - \mathrm{Log}(M_2) \|_{F}$, the S-divergence \citep{sra2016positive}, $d_S(M_1, M_2) = \log | (M_1 + M_2)/2 | - (1/2) \log | M_1 M_2 | $, where $| \cdot |$ denotes the determinant, the symmetrized Kullback-Leibler divergence, $d_{KL}(M_1, M_2) = \{ h(M_1, M_2) + h(M_2, M_1) \} / 2$, where $h(M_1, M_2) = \{ \mathrm{tr} (M_1^{-1} M_2) + \log|M_1| - \log|M_2| \} / 2$, the standard Euclidean metric, $d_E(M_1, M_2) = \| M_1 - M_2 \|_{F}$, and the Pearson metric, $d_P(M_1, M_2) = \| M_1/\|M_1\|_{F} - M_2/\|M_2\|_{F} \|_{F}$. Among these six distance metrics, the first three properly acknowledge the geometry of the matrix space $\mathcal{M}_d$, the fourth one hinges on the normality distribution, and the last two only leverage the Euclidean geometry.

\begin{figure}[t!]
\centering
\includegraphics[width=0.9\textwidth]{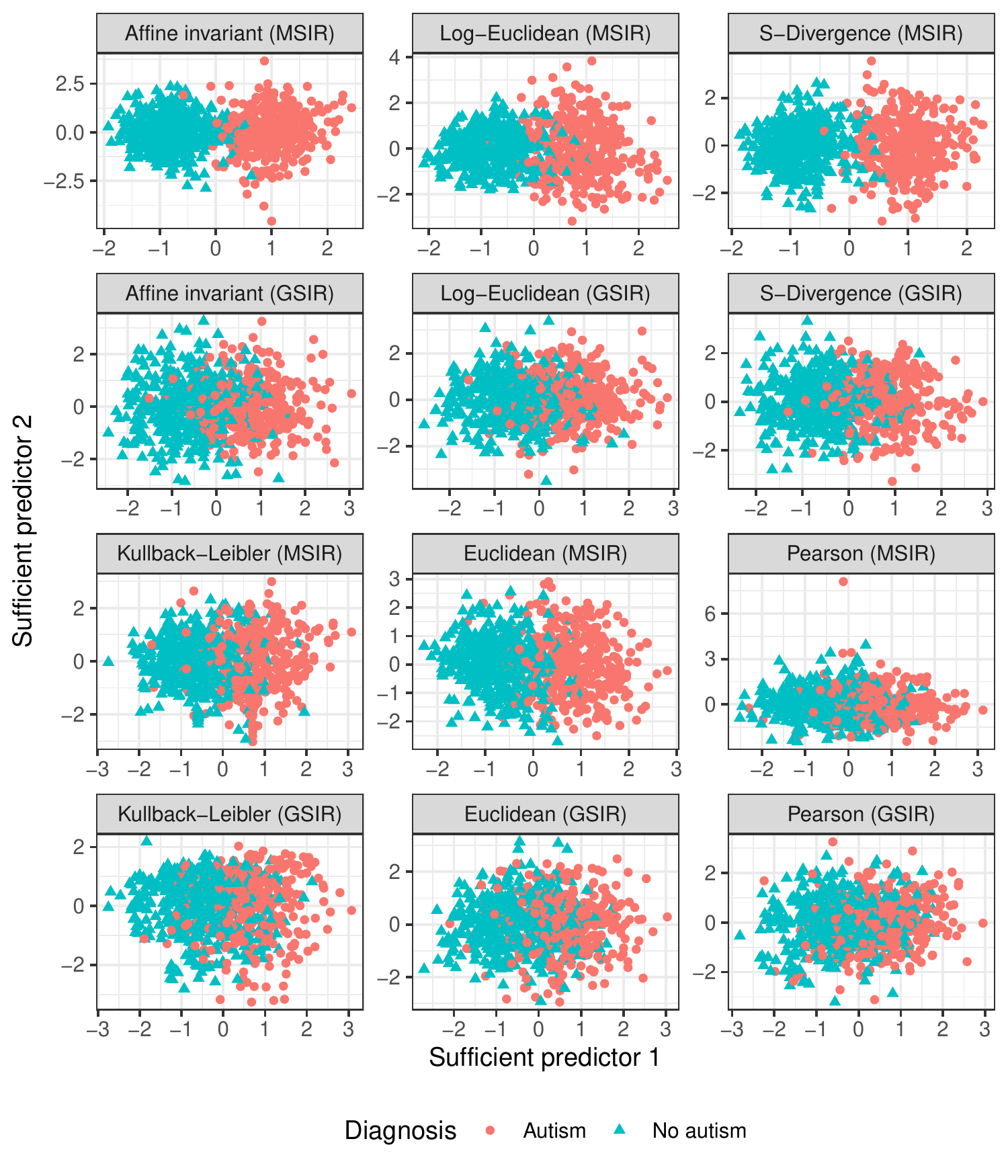}
\caption{The positive definite matrix data example: the sufficient predictors under two SDR methods and six metrics, with two groups of subjects, autism or control, marked by different colors.}
\vspace{1em}
\label{fig:psd_1}
\end{figure}

\begin{table}[t!]
\centering
\caption{The positive definite matrix data example: the leave-one-out cross-validation prediction error under two SDR methods and three metrics.}
\label{tab:psd_1}
\vspace{1em}
\begin{tabular}{r|ccc}\hline
 & Affine invariant & S-divergence & Euclidean \\
  \hline
MSIR & 0.306 & 0.302 & 0.333  \\
GSIR & 0.319 & 0.328 & 0.357  \\
   \hline
\end{tabular}
\end{table}

Figure \ref{fig:psd_1} shows the first two estimated sufficient predictors graphically. It is seen that the first sufficient predictors found by MSIR and GSIR are both able to separate the two groups of subjects to a good extent, whereas MSIR achieves generally a better separation than GSIR. Moreover, the first three distance metrics achieve a better separation than the last three metrics, which agrees with our expectation. Table \ref{tab:psd_1} reports the leave-one-out cross-validation prediction error when applying a quadratic discriminant analysis classifier to the extracted first two sufficient predictors. For simplicity, we only consider three metrics,  the affine invariant metric, and the S-divergence metric, due to their competitive performance as shown in Figure~\ref{fig:psd_1}, and the Euclidean metric, which serves as a benchmark. It confirms with the visual observation from Figure \ref{fig:psd_1} that MSIR outperforms GSIR, and the metrics that acknowledge the matrix geometry outperform the one that does not.

\subsection{Compositional data}

As the final example, we consider a compositional dataset from a gut microbiota study \citep{guo2016intestinal}. The dataset consists of $n = 83$ subjects, among whom 41 suffered from gout, and the rest not. For each subject, $p = 3684$ operational taxonomic units (OTUs) were measured, which characterizes the structure of the subject's intestinal microbiota. It is of great scientific interest to understand the association between the gout status and the OTU compositions \citep{guo2016intestinal}, and we aim to produce sufficient predictors to reflect the gout status. 

We follow the data processing procedure of \cite{pan2020ball} who analyzed the same data. Specifically, we first standardize the OTUs, so that the OTU measurements for each subject sum to one, and thus the data are compositional. In addition, the data are highly sparse, in that, on average, only 202 out of 3684 measurements are non-zero. As in \cite{pan2020ball}, we map the standardized vector to the $p$-dimensional unit sphere by taking element-wise square roots of the coordinates. 

We consider three distance metrics. The first metric is the arc length distance between two transformed compositions. The second metric is the Hamming distance evaluated on the dichotomized transformation of the compositions; i.e., all the nonzero entries are turned into one. This is motivated by the observation that the compositions are very sparse, and the positions rather than the magnitudes of the nonzero entries are more relevant. The third metric is the usual Euclidean distance. 

\begin{figure}[t!]
\centering
\includegraphics[width=1\textwidth]{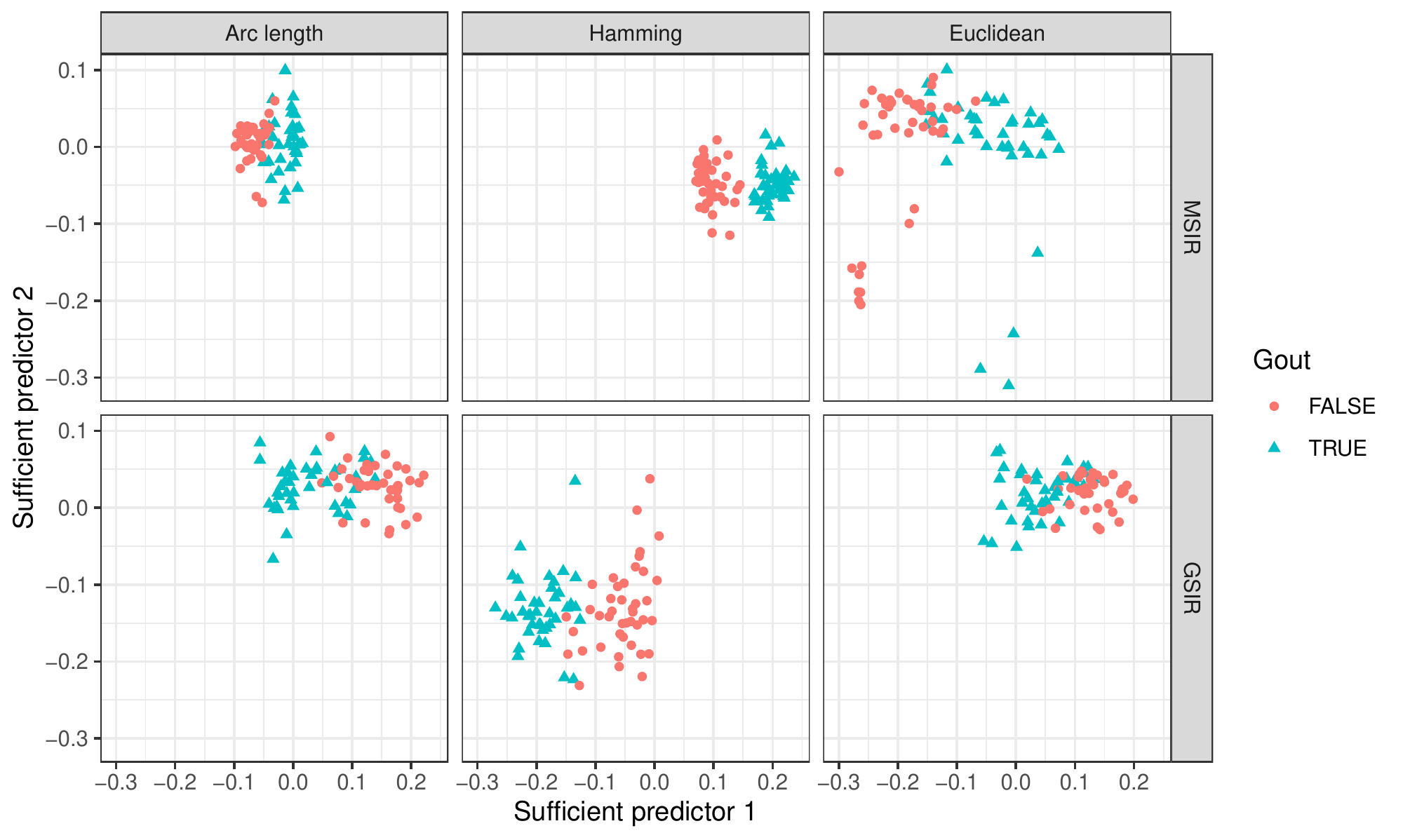}
\caption{The compositional data example: the sufficient predictors under two SDR methods (row) and three metrics (column), with two groups of subjects, gout or not, marked by different colors.}
\vspace{1em}
\label{fig:simplex_1}
\end{figure}

Figure~\ref{fig:simplex_1} shows the estimated top two sufficient predictors graphically. It is seen that the first sufficient predictors found by MSIR and GSIR are both able to separate the two groups of subjects to some extent. Particularly, MSIR with the Hamming distance metric achieves the best separation. Table \ref{tab:simplex_1} reports the leave-one-out cross-validation prediction error when applying a quadratic discriminant analysis classifier to the extracted sufficient predictors when $d$ is taken as 1 and 2, respectively. Again, the proposed MSIR with the Hamming distance metric achieves the best prediction accuracy. Moreover, there is little difference between $d=1$ and $d=2$, suggesting a single summary predictor is sufficient, which agrees with our expectation since the response is only binary. 

\begin{table}[t!]
\centering
\caption{The compositional data example: the leave-one-out cross-validation prediction error under two SDR methods, three metrics, and two working dimensions.}\label{tab:simplex_1}
\vspace{1em}
\begin{tabular}{ll|rrr} \hline
$d$ & Method & Arc length & Hamming & Euclidean \\ 
  \hline
\multirow{2}{*}{1} & MSIR & 0.241 & 0.229 & 0.253 \\ 
& GSIR & 0.253 & 0.229 & 0.289 \\ 
   \hline
\multirow{2}{*}{2} & MSIR & 0.229 & 0.229 & 0.277 \\ 
& GSIR & 0.253 & 0.229 & 0.289 \\ \hline
\end{tabular}
\end{table}

To conclude this study, we give an example on how to interpret the obtained sufficient predictors. The key idea is to compute the correlations between the sufficient and original predictors. Figure \ref{fig:simplex_2} shows the histograms of the correlations between the first sufficient predictor obtained by MSIR and the original predictor under the three metrics, which demonstrate a relatively clear bimodal pattern. By Figure \ref{fig:simplex_1}, a large value of the first MSIR sufficient predictor indicates the presence of gout in a subject. As such, we expect the rightmost peaks of the three histograms in Figure~\ref{fig:simplex_2} to correspond to OTUs that are associated with gout. To confirm this, we note that \cite{guo2016intestinal} identified the OTUs of the geni \textit{Coprococcus} (78 in total) and \textit{Barnesiella} (14 in total) as the ones mostly associated with non-gouty and gouty subjects, respectively. The OTUs of these two geni have been colored in the rugs below the histograms of Figure~\ref{fig:simplex_2} and they are indeed roughly divided between the two modes of the histograms, with \textit{Coprococcus} concentrating to the left peak and \textit{Barnesiella} to the right. This effect is most pronounced in the middle histogram corresponding to the Hamming distance, which is in line with our result that the Hamming distance gives the best performance out of the three distance metrics. 

\begin{figure}[t!]
\centering
\includegraphics[width=1\textwidth, height=2.25in]{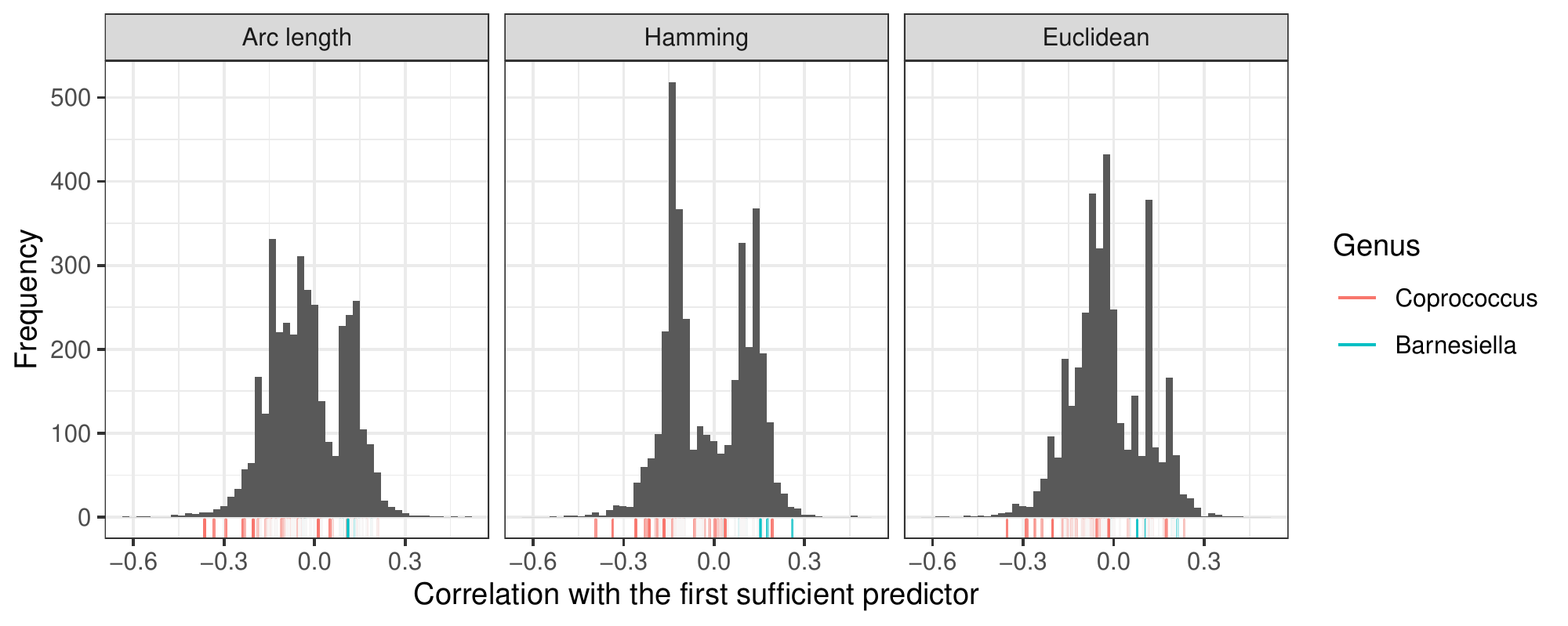}
\caption{The compositional data example: the histograms of the correlations between the first sufficient predictor obtained by MSIR and the original predictor under the three metrics.}
\vspace{1em}
\label{fig:simplex_2}
\end{figure}



\subsection*{Acknowledgements}
The authors thank the Editor, the Associate Editor, and the anonymous referee for their constructive comments. Virta's research is supported by the Academy of Finland (Grant 335077). Lee's research is partially supported by the NSF grant CIF-2102243, and the Seed Funding grant from Fox School of Business, Temple University. Li's research is partially supported by the NSF grant CIF-2102227 and the NIH grant R01AG061303.

\appendix

\section{Proofs of the theoretical results}\label{sec:proofs}

\vspace{-0.1in}
\noindent
\textbf{Proof of Lemma \ref{lem:almost_sure_representer}}:
As the metric space $(\Omega_X^0, d_X)$ is separable, so is $\Hcal_X^0$ \citep[Lemma 4.3]{lukic2001stochastic}. Being a closed subspace of a separable Hilbert space,  $\mathrm{ker}(\Sigma_{XX}^0)$ is also a separable Hilbert space, and hence admits a countable dense subset $\mathcal{K}$. Fixing $h \in \mathcal{K}$, we have $ \mathrm{Var}\{ h(X) \}=0$, implying that there exists a set $\mathcal{S}_{Xh}$, such that $P_X(\mathcal{S}_{Xh}) = 1$ and that, for all $x \in \mathcal{S}_{Xh}$, we have $ h(x) - \mathrm{E}\{ h(X) \} = \langle h, \kappa_X(\cdot, x) - \mu_X \rangle_{\Hcal_X^0}=0$.

Denoting $\Omega_X = \cap_{h \in \mathcal{K}} \mathcal{S}_{Xh}$, the countability of $\mathcal{K}$ implies that $P_X(\Omega_{X}) = 1$. We next show that, for all $x \in \Omega_X$, we have $\kappa_X( \cdot , x) - \mu_X \in \mathrm{ker}(\Sigma_{XX}^0)^\bot = \Hcal_X$. Taking an arbitrary $g \in \mathrm{ker}(\Sigma_{XX}^0)$, there exist a sequence of elements $h_j \in \mathcal{K}$, such that $\| g - h_j  \|_{\Hcal_X^0} \rightarrow 0 $ as $j \rightarrow \infty$. Let $\mathbb{N}$ denote the collection of natural numbers. Then, for an arbitrary $x \in \Omega_X$ and $j \in \mathbb{N}$, we have that,
\begin{align*}
	|\langle \kappa_X(\cdot, x) - \mu_X, g \rangle_{\Hcal_X^0} | &\leq |\langle \kappa_X(\cdot, x) - \mu_X, g - h_j \rangle_{\Hcal_X^0} | + |\langle \kappa_X(\cdot, x) - \mu_X, h_j \rangle_{\Hcal_X^0} | \\
	&\leq \| \kappa_X(\cdot, x) - \mu_X \|_{\Hcal_X^0} \| g - h_j \|_{\Hcal_X^0} + 0,
\end{align*}
which further implies that $\langle \kappa_X(\cdot, x) - \mu_X, g \rangle_{\Hcal_X^0} = 0$. This completes the proof of Lemma \ref{lem:almost_sure_representer}.
\eop

\bigskip
\medskip
\noindent
\textbf{Proof of Lemma \ref{lem:sir_operator}}:
\textit{(a)}. The proof is structurally similar to that of Proposition 1 in \cite{Li:2017}, but we include it for completeness.

Fix $f \in \Hcal_X$. Denote $\Sigma_{YY}^\dagger \Sigma_{YX} =R_{YX}$, and fix an arbitrary $g \in \Hcal_Y$. Then,
\begin{align*}
	\mathrm{Cov}\{ (R_{YX} f )(Y), g(Y) \} = \langle \Sigma_{YY}^\dagger \Sigma_{YX} f, \Sigma_{YY} g \rangle_{\Hcal_Y}&= \langle \Sigma_{YX} f, g \rangle_{\Hcal_Y} = \mathrm{Cov}\{ f(X), g(Y) \}.
\end{align*}
Consequently, for all $f \in \Hcal_X$ and $g \in \Hcal_Y$, we have, 
\begin{align}\label{eq:conditional_expectation_l2}
	\mathrm{Cov}\{ f(X) - (R_{YX} f)(Y), g(Y) \} = 0,
\end{align}
Consider an arbitrary $h \in L_2(P_Y)$. By Assumption \ref{assu:L2_dense_1}, there exist a sequence $\{ h_n \}$ of elements of $ \Hcal_Y$, such that $ \mathrm{var}\{h(Y) - h_n(Y)\} \rightarrow 0$, as $n \rightarrow \infty$. Therefore, by \eqref{eq:conditional_expectation_l2},
\begin{align*}
	& | \mathrm{Cov}\{ f(X) - (R_{YX} f)(Y), h(Y) \} |\\
	\leq \; & | \mathrm{Cov}\{ f(X) - (R_{YX} f)(Y), h(Y) - h_n(Y) \} | + | \mathrm{Cov}\{ f(X) - (R_{YX} f)(Y), h_n(Y) \} | \\
	\leq \; & [ \mathrm{Var}\{ f(X) - (R_{YX} f)(Y) \} \,\mathrm{Var} \{ h(Y) - h_n(Y) \} ]^{1/2} + 0, 
\end{align*}
for all $n$. The first variance in the final expression above is finite, since $\mathrm{Var}\{ f(X) \} \leq \| \Sigma_{XX} \|_{\mathrm{OP}} \| f \|^2_{\Hcal_X} < \infty$, and $\mathrm{Var}\{  (R_{YX} f)(Y) \} \leq \| R_{YX} \|_{\mathrm{OP}} \| \Sigma_{YX} \|_{\mathrm{OP}} \| f \|^2_{\Hcal_X} < \infty$. This implies that \eqref{eq:conditional_expectation_l2} holds also when $g$ is replaced with any $h \in L_2(P_Y)$. 

Note that a square-integrable random variable $Z$ is almost surely equal to the conditional expectation $\mathrm{E}\{ f(X) \mid Y \}$, if 
\begin{align}\label{eq:conditional_expectation_l2_2}
	\mathrm{E}[\{ f(X) - Z \} h (Y) ] = 0,
\end{align}
for all $h \in L_2(P_Y)$. A direct computation using \eqref{eq:conditional_expectation_l2} shows that the choice $Z = (R_{YX} f)(Y) - \mathrm{E} \{ (R_{YX} f)(Y) \} + \mathrm{E} \{ f(X) \} $ satisfies \eqref{eq:conditional_expectation_l2_2} for all $h \in L_2(P_Y)$, which implies the following holds almost surely, 
\begin{align}\label{eq:almost_sure_first_conditional}
	\mathrm{E}\{ f(X) \mid Y \} - \mathrm{E} \{ f(X) \} = (\Sigma_{YY}^\dagger \Sigma_{YX} f)(Y) - \mathrm{E} \{ (\Sigma_{YY}^\dagger \Sigma_{YX} f)(Y) \}.
\end{align}

Finally, by Lemma \ref{lem:almost_sure_representer}, the right-hand side of \eqref{eq:almost_sure_first_conditional} is almost surely equal to the random variable $\langle \Sigma_{YY}^\dagger \Sigma_{YX} f, \kappa_Y( \cdot, Y ) - \mu_Y \rangle_{\Hcal_Y}$, where we take $\kappa_Y( \cdot, Y ) - \mu_Y$ to equal the zero element for those values of $Y$ for which it is not a member of $\Hcal_Y$. This proves the assertion (a).

\medskip
\textit{(b)}. By definition,
\begin{align*}
	\mathrm{E} \langle  g, [ \{ \kappa_Y( \cdot, Y ) - \mu_Y \} \otimes \{ \kappa_Y( \cdot, Y ) - \mu_Y \} ] g' \rangle_{\Hcal_Y} =  \langle  g, \Sigma_{YY} g' \rangle_{\Hcal_Y},
\end{align*}
for all $g, g' \in \Hcal_Y$. This, in conjunction with part (a) of the lemma, implies that the left-hand side of the assertion (b) equals,
\begin{align*}
	& \mathrm{E} \langle \Sigma_{YY}^\dagger \Sigma_{YX} f, [ \{ \kappa_Y( \cdot, Y ) - \mu_Y \} \otimes \{ \kappa_Y( \cdot, Y ) - \mu_Y \} ] \Sigma_{YY}^\dagger \Sigma_{YX} f' \rangle_{\Hcal_Y} \\
	= \; & \langle \Sigma_{YY}^\dagger \Sigma_{YX} f, \Sigma_{YY} \Sigma_{YY}^\dagger \Sigma_{YX} f' \rangle_{\Hcal_Y}
	= \; \langle f, \Sigma_{XY} \Sigma_{YY}^\dagger \Sigma_{YX} f' \rangle_{\Hcal_X}.
\end{align*}
This proves the assertion (b), and completes the proof of Lemma \ref{lem:sir_operator}.
\eop

\bigskip
\medskip
\noindent
\textbf{Proof of Theorem \ref{theo:SIR_range_closure}}:
By Theorem 1 in \cite{douglas1966majorization}, and Assumption \ref{assu:inclusion}, both $\Sigma_{YY}^\dagger \Sigma_{YX}$ and $\Sigma_{XX}^\dagger \Sigma_{XY}$ are bounded. Henceforth, $\Lambda_{\mathrm{SIR}}$ is also bounded.

To prove the unbiasedness of $\Lambda_{\mathrm{SIR}}$, we first note that the set $\Scal_{Y \mid X}$ is closed because measurability is preserved in taking point-wise limits, which in an RKHS is implied by the convergence in norm. Concurrently, $\Scal_{Y \mid X} = \overline{\mathrm{span}} \{ f \in \Hcal_X \mid f \mbox{ is } \Gcal_{Y \mid X}\mbox{-measurable } \}$, and we have the desired result of $\overline{\mathrm{ran}} ( \Lambda_{\mathrm{SIR}} ) \subseteq  \Scal_{Y \mid X}$, as long as we can show that $ \Scal_{Y \mid X}^\bot \subseteq \mathrm{ker} ( \Lambda^*_{\mathrm{SIR}} )$.

We begin by establishing this inclusion for the elements of $\mathrm{ran}(\Sigma_{XX})$. Let $f =\Sigma_{XX} m$ for an arbitrary $ m \in \Hcal_X$. Suppose $\langle f, h \rangle_{\Hcal_X} = 0 $ for all $h \in \Scal_{Y \mid X}$, which implies that, $ \mathrm{Cov}\{ m(X), h(X) \} = \langle \Sigma_{XX} m, h \rangle_{\Hcal_X} = 0$ for all $h \in \Scal_{Y \mid X}$. Let $\Scal^*_{Y \mid X} = \{ h \in L_2(P_X) \mid h \mbox{ is } \mathcal{G}_{Y \mid X} \mbox{-measurable} \}$. Then, by \cite[Theorem 13.3]{Libook}, we have that $\mathrm{Cov}\{ m(X), h(X) \} = 0$ for all $h \in \Scal^*_{Y \mid X}$. This in turn implies that 
\begin{align*}
	\langle m - \mathrm{E}\{ m(X) \}, h  \rangle_{L_2(P_X)} = 0,
\end{align*}
for all $h \in \Scal^*_{Y \mid X}$, where $\mathrm{E}\{ m(X) \}$ represents the constant function taking the value $\mathrm{E}\{ m(X) \}$ everywhere. Therefore, following \citet[Lemma 1]{Lee:2013}, we have that
\begin{align}\label{eq:gyx_conditional_expectation}
	\mathrm{E} \{ m(X) \mid \mathcal{G}_{Y \mid X} \} - \mathrm{E}\{ m(X) \} = 0, \quad \textrm{almost surely}.
\end{align}

We next show that \eqref{eq:gyx_conditional_expectation} leads to $\mathrm{E} \{ m(X) \mid Y \} = \mathrm{E}\{ m(X) \}$ almost surely. Let $\sigma(Y, \mathcal{G}_{Y \mid X})$ be the smallest $\sigma$-field containing both $\sigma(Y)$ and $\mathcal{G}_{Y \mid X}$. By rule of iterative expectation, we have, almost surely, 
\begin{align*}
	\mathrm{E} \{ m(X) \mid Y \} = \mathrm{E} [ \mathrm{E} \{ m(X) \mid \sigma(Y, \mathcal{G}_{Y \mid X}) \} \mid Y ] = \mathrm{E} [ \mathrm{E} \{ m(X) \mid \mathcal{G}_{Y \mid X} \} \mid Y ] = \mathrm{E}\{ m(X) \},
\end{align*}
where the second equality follows from the fact that $Y \indep X \mid \mathcal{G}_{Y \mid X}$, and the third equality is by \eqref{eq:gyx_conditional_expectation}.     

Combining the above result with Lemma \ref{lem:sir_operator} leads to that, for all $g \in \Hcal_X$, 
\begin{align*}
	0 = \langle m, \Sigma_{XY} \Sigma_{YY}^{\dagger} \Sigma_{YX} g \rangle_{\Hcal_X}
	= \langle f, \Lambda_{\mathrm{SIR}} g \rangle_{\Hcal_X}.
\end{align*}
In other words, $f \in \mathrm{ker} ( \Lambda_{\mathrm{SIR}}^* )$. Therefore, $ \mathrm{ran}(\Sigma_{XX}) \cap \Scal_{Y \mid X}^\bot \subseteq \mathrm{ker} ( \Lambda_{\mathrm{SIR}}^* )$.

To extend this inclusion to hold in the full orthogonal complement $ \Scal_{Y \mid X}^\bot $, we invoke Assumption \ref{assu:L2_dense_3}, which implies that, for $f \in \Scal_{Y \mid X}^\bot $, there exist a sequence of elements $f_n$ of $ \mathrm{ran}(\Sigma_{XX}) \cap \Scal_{Y \mid X}^\bot$, such that $\| f_n - f \|_{\Hcal_X} \rightarrow 0$, as $n \rightarrow 0$. Because $\Lambda_{\mathrm{SIR}}^* f_n = 0$ for all $n$, we also have $\Lambda_{\mathrm{SIR}}^* f = 0$ by continuity. This completes the proof of Theorem \ref{theo:SIR_range_closure}.
\eop

\bigskip
\medskip
\noindent
\textbf{Proof of Theorem \ref{theo:asymptotics_theorem_1}}:
We first present three auxiliary lemmas, under the same set of conditions of Theorem \ref{theo:asymptotics_theorem_1}. We then prove Theorem \ref{theo:asymptotics_theorem_1} based on these lemmas.

The first auxiliary lemma shows that the sample covariance operators are root-$n$ consistent estimators of the corresponding population counterparts.

\begin{lemma}\label{lem:technical_1}
	Suppose the conditions of Theorem \ref{theo:asymptotics_theorem_1} hold. Then, $\| \hat{\Sigma}_{XX} - \Sigma_{XX} \|_{\mathrm{HS}}$, $\| \hat{\Sigma}_{XY} - \Sigma_{XY} \|_{\mathrm{HS}}$ and $\| \hat{\Sigma}_{YY} - \Sigma_{YY} \|_{\mathrm{HS}}$ are of the order $\mathcal{O}_p(1/\sqrt{n})$.
\end{lemma}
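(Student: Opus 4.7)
The plan is to decompose each sample covariance operator into a sum of i.i.d.\ random rank-one operators in the Hilbert--Schmidt space of operators on the relevant RKHS, then control the resulting centered mean by a standard second-moment argument for Hilbert-space-valued random variables.

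I first address $\hat{\Sigma}_{XX} - \Sigma_{XX}$; the cross-case and the $YY$-case follow from the same template. Expanding the sample centered products gives
\[
\hat{\Sigma}_{XX} = \frac{1}{n}\sum_{i=1}^n \kappa_X(\cdot, X_i) \otimes \kappa_X(\cdot, X_i) - \hat{\mu}_X \otimes \hat{\mu}_X,
\]
together with the analogous identity $\Sigma^0_{XX} = \E\{\kappa_X(\cdot, X) \otimes \kappa_X(\cdot, X)\} - \mu_X \otimes \mu_X$ at the population level. Viewing both operators as living on the ambient space $\Hcal_X^0$ (noting that, by Lemma \ref{lem:almost_sure_representer}, all of the relevant centered tensor products almost surely vanish on $\mathrm{ker}(\Sigma^0_{XX}) = \Hcal_X^\bot$, so that the HS-norms on $\Hcal_X^0$ and on $\Hcal_X$ coincide), I would write
\[
\hat{\Sigma}_{XX} - \Sigma_{XX} = T_n - \bigl(\hat{\mu}_X \otimes \hat{\mu}_X - \mu_X \otimes \mu_X\bigr),
\]
where $T_n = n^{-1}\sum_{i=1}^n \{ \kappa_X(\cdot, X_i) \otimes \kappa_X(\cdot, X_i) - \E[\kappa_X(\cdot, X) \otimes \kappa_X(\cdot, X)] \}$.

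For the leading term $T_n$, each summand is an i.i.d.\ centered HS-valued random element whose HS-norm equals $\kappa_X(X_i, X_i)$ (by the reproducing property and the identity $\|f\otimes g\|_{\mathrm{HS}} = \|f\|\,\|g\|$), so Assumption \ref{assu:fourth_moments} guarantees a finite second HS-moment of each summand; a direct variance calculation then gives $\E \| T_n \|_{\mathrm{HS}}^2 = \mathcal{O}(1/n)$, hence $\| T_n \|_{\mathrm{HS}} = \mathcal{O}_p(1/\sqrt{n})$ by Chebyshev's inequality. For the correction term, I would use the identity $\hat{\mu}_X\otimes\hat{\mu}_X - \mu_X\otimes\mu_X = (\hat{\mu}_X - \mu_X)\otimes \hat{\mu}_X + \mu_X \otimes (\hat{\mu}_X - \mu_X)$ together with the standard root-$n$ rate $\|\hat{\mu}_X - \mu_X\|_{\Hcal_X^0} = \mathcal{O}_p(1/\sqrt{n})$ --- itself a consequence of the same second-moment Hilbert-space argument applied to the sample mean of $\kappa_X(\cdot, X_i)$ under Assumption \ref{assu:L2_subset} --- and the tightness of $\|\hat{\mu}_X\|_{\Hcal_X^0}$.

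The $\hat{\Sigma}_{XY}$ and $\hat{\Sigma}_{YY}$ cases are handled by an identical decomposition. The only new ingredient needed for the cross term is the Cauchy--Schwarz bound $\E\{\kappa_X(X,X)\,\kappa_Y(Y,Y)\} \le [\E\kappa_X(X,X)^2 \cdot \E\kappa_Y(Y,Y)^2]^{1/2} < \infty$, which controls the HS-variance of the cross tensor-product summands under Assumption \ref{assu:fourth_moments}. I do not anticipate a serious obstacle here: the real work is bookkeeping, namely ensuring that the bounds computed on the ambient spaces $\Hcal_X^0, \Hcal_Y^0$ transfer to the operators acting on $\Hcal_X, \Hcal_Y$, and handling the almost-sure nature of the centered representers from Lemma \ref{lem:almost_sure_representer}; both points are routine.
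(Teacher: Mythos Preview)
Your proposal is correct and follows essentially the same approach as the paper: decompose $\hat{\Sigma}_{XX}-\Sigma_{XX}$ into an i.i.d.\ centered HS-valued average plus a mean-correction term, bound the HS second moment of the summands via Assumption~\ref{assu:fourth_moments}, and conclude by Markov/Chebyshev. The only cosmetic difference is that the paper centers by $\mu_X$ from the outset (working with $h_{X_i}=\kappa_X(\cdot,X_i)-\mu_X$, so the correction term is $\bar h_n\otimes\bar h_n=\mathcal O_p(1/n)$), whereas you use the raw kernels and obtain the correction $\hat\mu_X\otimes\hat\mu_X-\mu_X\otimes\mu_X=\mathcal O_p(1/\sqrt n)$; either bound suffices.
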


\begin{proof}[Proof of Lemma \ref{lem:technical_1}]
	Denote $h_X = \kappa_X( \cdot, X ) - \mu_X$. By definition, the covariance operator $\Sigma_{XX}$ satisfies that,
	\begin{align*}
		\langle f, \Sigma_{XX} g \rangle_{\Hcal_X} = \mathrm{E} ( \langle f, h_X \rangle_{\Hcal_X} \langle g, h_X \rangle_{\Hcal_X} ) = \mathrm{E} \{ \langle f, (h_X \otimes h_X) g \rangle_{\Hcal_X} \} ,
	\end{align*}
	where $h_X$ is to be the zero element for those realizations $x \in \Omega$ not belonging to the almost sure set in Lemma \ref{lem:almost_sure_representer}.
	
	Since the covariance operator in a separable Hilbert space is a trace-class operator \citep{Blanchard1999}, we have that $\| \Sigma_{XX} \|_{\mathrm{HS}} < \infty $. To show $\| \hat{\Sigma}_{XX} - \Sigma_{XX} \|_{\mathrm{HS}} = \mathcal{O}_p(1/\sqrt{n})$, we note that,
	\begin{align*}
		\hat{\Sigma}_{XX} = \frac{1}{n} \sum_{i=1}^n (b_{X_i} \otimes b_{X_i}), 
	\end{align*}
	where $b_{X_i} = \kappa_X(\cdot, X_i) - (1/n) \sum_{j=1}^n \kappa_X(\cdot, X_j)$. Denoting $h_{X_i} = \kappa_X(\cdot, X_i) - \mu_X$, we further have that $b_{X_i} = h_{X_i} - \bar{h}_n $ where $ \bar{h}_n = (1/n) \sum_{i=1}^n h_{X_i} $. Therefore,
	\begin{align}\label{eq:covariance_convergence_parts}
		\| \hat{\Sigma}_{XX} - \Sigma_{XX} \|_{\mathrm{HS}} \leq \left\| \frac{1}{n} \sum_{i=1}^n (h_{X_i} \otimes h_{X_i}) - \Sigma_{XX} \right\|_{\mathrm{HS}} + \| \bar{h}_n \otimes \bar{h}_n \|_{\mathrm{HS}}.
	\end{align}
	
	For the second term on the right-hand-side of \eqref{eq:covariance_convergence_parts}, we have that,
	\begin{align}\label{eq:covariance_convergence_part_1}
		\mathrm{E} \| \bar{h}_n \otimes \bar{h}_n \|_{\mathrm{HS}} = \mathrm{E} \| \bar{h}_n \|^2_{\Hcal_X} = \frac{1}{n^2} \sum_{i=1}^n \sum_{j=1}^n \mathrm{E} \langle h_{X_i}, h_{X_j} \rangle_{\Hcal_X}.
	\end{align}
	For any pair of distinct indices $i \neq j$, $ h_{X_i} $ and $ h_{X_j} $ are independent. This means that, 
	\begin{align}\label{eq:parseval_argument}
		\begin{split}
			\mathrm{E} \langle h_{X_i}, h_{X_j} \rangle_{\Hcal_X} = \mathrm{E}_{X_j} \langle \mathrm{E}_{X_i} (h_{X_i}), h_{X_j} \rangle_{\Hcal_X} = \mathrm{E}_{X_j} \langle 0, h_{X_j} \rangle_{\Hcal_X} = 0,
		\end{split}
	\end{align}
	where $\mathrm{E}_{X_i}(\cdot)$ is the expectation with respect to the distribution of $X_i$. Consequently, 
	\begin{align*}
		\mathrm{E} \| \bar{h}_n \otimes \bar{h}_n \|_{\mathrm{HS}} = \frac{1}{n^2} \sum_{i=1}^n \mathrm{E} \| h_{X_i}\|^2_{\Hcal_X} = \frac{1}{n} \mathrm{E} \| h_{X_1}\|^2_{\Hcal_X}, 
	\end{align*}
	where $\mathrm{E} \| h_{X_1} \|_{\Hcal_X}^2 = \mathrm{E} \| h_{X_1} \|_{\Hcal_X^0}^2 = \mathrm{E} \{ \kappa_X(X_1, X_1) \} - \| \mu_X \|_{\Hcal_X^0}^2 < \infty$, which holds by Assumption~\ref{assu:fourth_moments}. Therefore, $\mathrm{E} \| \bar{h}_n \otimes \bar{h}_n \|_{\mathrm{HS}} = \mathcal{O}(1/n)$, which, by Markov's inequality, further implies that $\| \bar{h}_n \otimes \bar{h}_n \|_{\mathrm{HS}} = \mathcal{O}_p(1/n)$. 
	
	For the first term on the right-hand-side of \eqref{eq:covariance_convergence_parts}, we have that, 
	\begin{align}\label{eq:covariance_convergence_part_2}
		\mathrm{E} \left\| \frac{1}{n} \sum_{i=1}^n (h_{X_i} \otimes h_{X_i}) - \Sigma_{XX} \right\|_{\mathrm{HS}}^2 = \frac{1}{n^2} \sum_{i=1}^n \sum_{j=1}^n \mathrm{E} \langle H_i, H_j  \rangle_{\mathrm{HS}},
	\end{align}
	where $H_i = (h_{X_i} \otimes h_{X_i}) - \Sigma_{XX}$. By the definition of the Hilbert-Schmidt norm,
	\begin{align*}
		\mathrm{E} \langle H_i, H_j  \rangle_{\mathrm{HS}} = \mathrm{E} \langle h_{X_i} \otimes h_{X_i}, h_{X_j} \otimes h_{X_j}  \rangle_{\mathrm{HS}} - \langle \Sigma_{XX}, \Sigma_{XX}  \rangle_{\mathrm{HS}}.
	\end{align*}
	Therefore, adopting he same strategy used to simplify \eqref{eq:covariance_convergence_part_1}, we have $\mathrm{E} \langle h_{X_i} \otimes h_{X_i}, h_{X_j} \otimes h_{X_j}  \rangle_{\mathrm{HS}} = \langle \Sigma_{XX}, \Sigma_{XX}  \rangle_{\mathrm{HS}}$, for $i \neq j$. This allows us to ignore all pairs of distinct indices in \eqref{eq:covariance_convergence_part_2}, and we obtain that,
	\begin{align}\label{eq:covariance_convergence_part_3}
		\frac{1}{n^2} \sum_{i=1}^n \sum_{j=1}^n \mathrm{E} \langle H_i, H_j  \rangle_{\mathrm{HS}} = \frac{1}{n^2} \sum_{i=1}^n \mathrm{E} \| H_i \|_{\mathrm{HS}}^2 = \frac{1}{n} \mathrm{E} \| H_1 \|_{\mathrm{HS}}^2.
	\end{align}
	Correspondingly,
	\begin{align*}
		\mathrm{E} \| H_1 \|_{\mathrm{HS}}^2 \leq \; & \mathrm{E} \| h_{X_1} \otimes h_{X_1} \|_{\mathrm{HS}}^2 + 2 \mathrm{E} \| h_{X_1} \otimes h_{X_1} \|_{\mathrm{HS}} \| \Sigma_{XX} \|_{\mathrm{HS}} + \| \Sigma_{XX} \|_{\mathrm{HS}}^2\\
		= \; & \mathrm{E} \| h_{X_1} \|_{\Hcal_X}^4 + 2 \mathrm{E} \| h_{X_1} \|_{\Hcal_X}^2 \| \Sigma_{XX} \|_{\mathrm{HS}} + \| \Sigma_{XX} \|_{\mathrm{HS}}^2,
	\end{align*}
	which is finite, because $\Sigma_{XX}$ is a Hilbert-Schmidt operator, and that, by Assumption~\ref{assu:fourth_moments}, $\mathrm{E} \| \kappa_{X}(\cdot, X_1) \|_{\Hcal_X}^4 = \mathrm{E} \{ \kappa_X(X_1, X_1)^2 \} < \infty $, guaranteeing that $\mathrm{E} \| h_{X_1} \|_{\Hcal_X}^4$ is finite. Together, \eqref{eq:covariance_convergence_part_2} and \eqref{eq:covariance_convergence_part_3}, along with Markov's inequality, imply that
	\begin{align*}
		\left\| \frac{1}{n} \sum_{i=1}^n (h_{X_i} \otimes h_{X_i}) - \Sigma_{XX} \right\|_{\mathrm{HS}} = \mathcal{O}_p(1/\sqrt{n}).
	\end{align*}
	
	Combining the results above, we obtain that $\| \hat{\Sigma}_{XX} - \Sigma_{XX} \|_{\mathrm{HS}}  = \mathcal{O}_p(1/\sqrt{n})$. The results for $\hat{\Sigma}_{XY}$ and $\hat{\Sigma}_{YY}$ can be established similarly. This completes the proof of Lemma \ref{lem:technical_1}. 
\end{proof}

The second auxiliary lemma shows that the inverse operators $G_{n1}^{-1}$ and $G_{n2}^{-1}$ are bounded in the operator norm, where $G_{n1}^{-1} = (\hat{\Sigma}_{XX} + \tau I)^{-1}$ and $G_{n2}^{-1} = (\Sigma_{XX} + \tau I)^{-1}$. 

\begin{lemma}\label{lem:technical_2}
	Suppose the conditions of Theorem \ref{theo:asymptotics_theorem_1} hold. Then, $\| G_{n1}^{-1} \|_{\mathrm{OP}} \leq 1/\tau$, and $\| G_{n2}^{-1} \|_{\mathrm{OP}} \leq 1/\tau$.
\end{lemma}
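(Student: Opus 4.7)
The plan is to exploit the fact that both $\hat{\Sigma}_{XX}$ and $\Sigma_{XX}$ are positive semi-definite self-adjoint operators on $\Hcal_X$. Positivity of $\Sigma_{XX}$ follows directly from its definition, since $\langle f, \Sigma_{XX} f \rangle_{\Hcal_X} = \mathrm{Var}\{f(X)\} \geq 0$ for every $f \in \Hcal_X$, while positivity of $\hat{\Sigma}_{XX}$ follows from its representation $\hat{\Sigma}_{XX} = n^{-1}\sum_{i=1}^n (b_{X_i} \otimes b_{X_i})$ established in the proof of Lemma~\ref{lem:technical_1}, because $\langle f, (b \otimes b) f\rangle_{\Hcal_X} = \langle f, b\rangle_{\Hcal_X}^2 \geq 0$. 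Self-adjointness in both cases is immediate from the symmetry of the inner product and of the tensor products involved.

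Given positivity, the core of the argument is the elementary spectral fact that a positive self-adjoint operator $A$ shifted by $\tau I$ has spectrum in $[\tau, \infty)$, so $(A + \tau I)^{-1}$ is well-defined and bounded by $1/\tau$. I would avoid invoking the spectral theorem explicitly and instead give a short direct computation: for any $f \in \Hcal_X$,
\begin{align*}
\langle f, (A + \tau I) f \rangle_{\Hcal_X} = \langle f, A f\rangle_{\Hcal_X} + \tau \|f\|_{\Hcal_X}^2 \geq \tau \|f\|_{\Hcal_X}^2,
\end{align*}
and then by Cauchy--Schwarz,
\begin{align*}
\|(A + \tau I) f\|_{\Hcal_X} \, \|f\|_{\Hcal_X} \;\geq\; \langle f, (A + \tau I) f\rangle_{\Hcal_X} \;\geq\; \tau \|f\|_{\Hcal_X}^2,
\end{align*}
so $\|(A + \tau I) f\|_{\Hcal_X} \geq \tau \|f\|_{\Hcal_X}$. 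Substituting $f = (A + \tau I)^{-1} g$ for arbitrary $g \in \Hcal_X$ yields $\|g\|_{\Hcal_X} \geq \tau \|(A + \tau I)^{-1} g\|_{\Hcal_X}$, which is precisely the bound $\|(A + \tau I)^{-1}\|_{\mathrm{OP}} \leq 1/\tau$. Applying this with $A = \hat{\Sigma}_{XX}$ and $A = \Sigma_{XX}$ gives the two claimed inequalities.

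There is essentially no obstacle: the lemma is a routine functional-analytic statement, and the only thing worth being careful about is confirming the positivity and self-adjointness of the two operators in the RKHS setting, which is immediate from the structures already made explicit in earlier parts of the paper. No further regularity beyond what is already assumed is needed, and no additional technical machinery is required.
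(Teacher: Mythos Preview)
Your proposal is correct and follows essentially the same route as the paper: both arguments use the positive semi-definiteness of the covariance operators together with Cauchy--Schwarz to obtain the lower bound $\|(A+\tau I)f\|_{\Hcal_X} \ge \tau\|f\|_{\Hcal_X}$ and then invert. The only cosmetic difference is that the paper first factors out $\tau$ and shows $\|(\Sigma_{XX}/\tau + I)^{-1}\|_{\mathrm{OP}} \le 1$, whereas you work directly with $A+\tau I$; the two computations are equivalent.
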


\begin{proof}[Proof of Lemma \ref{lem:technical_2}]
	Note that $\| G_{n2}^{-1} \|_{\mathrm{OP}} = (1/\tau) \| (\Sigma_{XX}/\tau + I)^{-1} \|_{\mathrm{OP}} \leq 1/\tau$. This relation holds because, by the positive semi-definiteness of $T = \Sigma_{XX}/\tau$, we have, for arbitrary $f \in \Hcal_X$, 
	\begin{align*}
		\| (T + I)^{-1} f \|^2_{\Hcal_X} \leq \; & \langle (T + I) (T + I)^{-1} f, (T + I)^{-1} f \rangle_{\Hcal_X} \\
		\leq \; & \| f \|_{\Hcal_X} \| (T + I)^{-1} f \|_{\Hcal_X}, 
	\end{align*}
	implying that $\| (T + I)^{-1} f \|_{\Hcal_X} \leq \| f \|_{\Hcal_X}$. Similarly, we can show that $\| G_{n1}^{-1} \|_{\mathrm{OP}} \leq 1/\tau $. This completes the proof of Lemma \ref{lem:technical_2}. 
\end{proof}

The third auxiliary lemma establishes the convergence rate for the effect of replacing the pseudo-inverse with its regularized counterpart on the population level.

\begin{lemma}\label{lem:technical_3}
	Suppose the conditions of Theorem \ref{theo:asymptotics_theorem_1} hold. Then, $\| G_{n2}^{-1} \Sigma_{XY} -  \Sigma_{XX}^\dagger  \Sigma_{XY} \|_{\mathrm{OP}} = \mathcal{O}(\tau)$.
\end{lemma}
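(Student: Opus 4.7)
\textbf{Proof plan for Lemma \ref{lem:technical_3}.} The plan is to factor $\Sigma_{XY}$ through $\Sigma_{XX}^2$ using the smoothness hypothesis in Assumption \ref{assu:asymptotics_regularity}, and then exploit a resolvent-type identity together with a standard spectral bound for the regularized operator.

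First, by Assumption \ref{assu:asymptotics_regularity}, $\mathrm{ran}(\Sigma_{XY}) \subseteq \mathrm{ran}(\Sigma_{XX}^2)$. Douglas's factorization theorem \citep{douglas1966majorization} then yields a bounded operator $B : \Hcal_Y \to \Hcal_X$ such that $\Sigma_{XY} = \Sigma_{XX}^2 B$. The difference we want to bound can therefore be written as
\begin{align*}
G_{n2}^{-1} \Sigma_{XY} - \Sigma_{XX}^\dagger \Sigma_{XY} = \left[ (\Sigma_{XX} + \tau I)^{-1} - \Sigma_{XX}^\dagger \right] \Sigma_{XX}^2 B.
\end{align*}

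Next I would simplify each of the two terms inside the bracket when composed with $\Sigma_{XX}^2$. The algebraic identity
\begin{align*}
(\Sigma_{XX} + \tau I)^{-1} \Sigma_{XX}^2 = (\Sigma_{XX} + \tau I)^{-1} \bigl[(\Sigma_{XX} + \tau I) - \tau I\bigr] \Sigma_{XX} = \Sigma_{XX} - \tau (\Sigma_{XX} + \tau I)^{-1} \Sigma_{XX}
\end{align*}
takes care of the first term. For the second, I would use the fact that $\Hcal_X = \overline{\mathrm{ran}}(\Sigma_{XX}^0)$ was chosen precisely to make $\ker(\Sigma_{XX}) = \{0\}$ and $\overline{\mathrm{ran}}(\Sigma_{XX}) = \Hcal_X$. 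Consequently $\Sigma_{XX}^\dagger \Sigma_{XX}$ is the orthogonal projection onto $\overline{\mathrm{ran}}(\Sigma_{XX}) = \Hcal_X$, hence equals the identity on $\Hcal_X$, and therefore $\Sigma_{XX}^\dagger \Sigma_{XX}^2 = \Sigma_{XX}$. Subtracting gives
\begin{align*}
G_{n2}^{-1} \Sigma_{XY} - \Sigma_{XX}^\dagger \Sigma_{XY} = -\tau \, (\Sigma_{XX} + \tau I)^{-1} \Sigma_{XX} \, B.
\end{align*}

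Finally, I would control the operator norm by a spectral calculus argument analogous to Lemma \ref{lem:technical_2}. Since $\Sigma_{XX}$ is positive, self-adjoint and trace-class, it admits a spectral decomposition with nonnegative eigenvalues $\lambda_j$, on which $(\Sigma_{XX} + \tau I)^{-1} \Sigma_{XX}$ acts by multiplication by $\lambda_j / (\lambda_j + \tau) \in [0,1]$. Hence $\| (\Sigma_{XX} + \tau I)^{-1} \Sigma_{XX} \|_{\mathrm{OP}} \leq 1$, which, combined with the boundedness of $B$, yields
\begin{align*}
\left\| G_{n2}^{-1} \Sigma_{XY} - \Sigma_{XX}^\dagger \Sigma_{XY} \right\|_{\mathrm{OP}} \leq \tau \, \| B \|_{\mathrm{OP}} = \mathcal{O}(\tau).
\end{align*}

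The only subtlety I expect is the identity $\Sigma_{XX}^\dagger \Sigma_{XX}^2 = \Sigma_{XX}$, because the Moore--Penrose pseudo-inverse of a non-compact-range operator is only densely defined; this is what makes the choice $\Hcal_X = \overline{\mathrm{ran}}(\Sigma_{XX}^0)$ essential, since it forces $\ker(\Sigma_{XX}) = \{0\}$. Everything else reduces to a resolvent manipulation and a routine spectral estimate.
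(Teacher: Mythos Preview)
Your proposal is correct and follows essentially the same route as the paper: factor $\Sigma_{XY} = \Sigma_{XX}^2 B$ via Douglas's theorem under Assumption~\ref{assu:asymptotics_regularity}, reduce the difference to $-\tau\,(\Sigma_{XX}+\tau I)^{-1}\Sigma_{XX}\,B$, and bound the middle factor uniformly. The only cosmetic difference is that the paper bounds $\|(\Sigma_{XX}+\tau I)^{-1}\Sigma_{XX}\|_{\mathrm{OP}}$ by writing it as $I-\tau G_{n2}^{-1}$ and applying the triangle inequality with Lemma~\ref{lem:technical_2} (giving a constant $2$), whereas your direct spectral argument $\lambda/(\lambda+\tau)\le 1$ is slightly sharper; your explicit justification of $\Sigma_{XX}^\dagger \Sigma_{XX}^2 = \Sigma_{XX}$ is also a point the paper uses implicitly.
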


\begin{proof}[Proof of Lemma \ref{lem:technical_3}]
	By Assumption \ref{assu:asymptotics_regularity} and Theorem 1 of \cite{douglas1966majorization}, we have $\Sigma_{XY} = \Sigma_{XX}^2 C$, for some bounded operator $C: \Hcal_X \to \Hcal_X$. This further implies that,
	\begin{align*}
		\| G_{n2}^{-1} \Sigma_{XY} -  \Sigma_{XX}^\dagger  \Sigma_{XY} \|_{\mathrm{OP}} &\leq \| (G_{n2}^{-1} \Sigma_{XX} - I ) \Sigma_{XX} \|_{\mathrm{OP}} \| C  \|_{\mathrm{OP}} \\
		& = \| -\tau G_{n2}^{-1} \Sigma_{XX} \|_{\mathrm{OP}} \| C  \|_{\mathrm{OP}}= \tau \| \tau G_{n2}^{-1} - I \|_{\mathrm{OP}} \| C  \|_{\mathrm{OP}},
	\end{align*}
	where $\| \tau G_{n2}^{-1} - I \|_{\mathrm{OP}} \leq \tau \|  G_{n2}^{-1}  \|_{\mathrm{OP}} + \| I \|_{\mathrm{OP}} \leq 2$, by Lemma \ref{lem:technical_2}. Therefore,
	\begin{align*}
		\| G_{n2}^{-1} \Sigma_{XY} -  \Sigma_{XX}^\dagger  \Sigma_{XY} \|_{\mathrm{OP}} = \mathcal{O}(\tau).
	\end{align*}
	This completes the proof of Lemma \ref{lem:technical_3}.
\end{proof}


Based on the above three auxiliary lemmas, we next prove Theorem \ref{theo:asymptotics_theorem_1}.

We first establish the closeness of $ G_{n1}^{-1} \hat{\Sigma}_{XY}$ to the operator $ G_{n2}^{-1} \Sigma_{XY}$. Note that 
\begin{align}\label{eq:first_pseudoinverse_expansion}
	\begin{split}
		& \| G_{n1}^{-1} \hat{\Sigma}_{XY} -  G_{n2}^{-1} \Sigma_{XY} \|_{\mathrm{OP}} \\
		\leq \; &  \|  G_{n1}^{-1} \|_{\mathrm{OP}} \|  \hat{\Sigma}_{XY} -  \Sigma_{XY} \|_{\mathrm{OP}} + \| (G_{n1}^{-1} - G_{n2}^{-1}) \Sigma_{XY} \|_{\mathrm{OP}}.
	\end{split}
\end{align}
To simplify \eqref{eq:first_pseudoinverse_expansion}, we observe that, by Assumption \ref{assu:asymptotics_regularity}, we have $\Sigma_{XY} = \Sigma_{XX} D$ for some bounded operator $D$. This implies that 
\begin{align*}
	\| ( G_{n1}^{-1} - G_{n2}^{-1} ) \Sigma_{XY} \|_{\mathrm{OP}} &= \| G_{n1}^{-1} ( G_{n1} - G_{n2} ) G_{n2}^{-1} \Sigma_{XY} \|_{\mathrm{OP}}\\
	& \leq \| G_{n1}^{-1} \|_{\mathrm{OP}} \| G_{n1} - G_{n2} \|_{\mathrm{OP}} \| G_{n2}^{-1} \Sigma_{XY} \|_{\mathrm{OP}} \\
	& \leq (1/\tau) \| \hat{\Sigma}_{XX} - \Sigma_{XX}  \|_{\mathrm{OP}}  \| G_{n2}^{-1} \Sigma_{XY} \|_{\mathrm{OP}} \\
	& \leq \mathcal{O}_p(1/\{\tau \sqrt{n}\}) \| (\Sigma_{XX} + \tau I)^{-1} \Sigma_{XX} \|_{\mathrm{OP}} \| D \|_{\mathrm{OP}}  \\
	&= \mathcal{O}_p(1/\{\tau \sqrt{n}\}),
\end{align*}
where the last equality holds because the largest eigenvalue of the operator $(\Sigma_{XX} + \tau I)^{-1} \Sigma_{XX}$ is bounded from above by one.

Therefore, together with Lemmas \ref{lem:technical_1} and \ref{lem:technical_2}, we have that
\begin{align}\label{eq:first_pseudoinverse_result}
	\| G_{n1}^{-1} \hat{\Sigma}_{XY} -  G_{n2}^{-1} \Sigma_{XY} \|_{\mathrm{OP}} = \mathcal{O}_p(1/\{\tau \sqrt{n}\}).
\end{align}
Combining \eqref{eq:first_pseudoinverse_result} with Lemma \ref{lem:technical_3} leads to
\begin{align*}
	\| G_{n1}^{-1} \Sigma_{XY} -  \Sigma_{XX}^\dagger  \Sigma_{XY} \|_{\mathrm{OP}} = \mathcal{O}_p( \tau + 1/\{\tau \sqrt{n}\} ).
\end{align*}
The sample convergence of $(\hat{\Sigma}_{YY} + \tau I)^{-1} \hat{\Sigma}_{YX}$ can be established similarly. This completes the proof of Theorem \ref{theo:asymptotics_theorem_1}. 

\eop

\bigskip
\medskip
\noindent
\textbf{Proof of Theorem \ref{theo:asymptotics_theorem_2}}:
Denote $\mathbb{I}_{ik} = \mathbb{I}(Y_i = k)$, $N_k = \sum_{i=1}^n \mathbb{I}_{ik}$, and $h_{X_i} = \kappa_X(\cdot, X_i) - \mu_X$. We have that
\begin{align*}
	\hat{\gamma}_{X \mid k} = \frac{1}{N_k} \sum_{i=1}^n \mathbb{I}_{ik} h_{X_i} - \frac{1}{n} \sum_{i=1}^n h_{X_i} =\frac{1}{N_k} \sum_{i=1}^n \mathbb{I}_{ik} h_{X_i} - \bar{h}_n.
\end{align*}
Consequently,
\begin{align}\label{eq:conditional_mean_deviation}
	\begin{split}
		\mathrm{E} \| \hat{\gamma}_{X \mid k} - \gamma_{X \mid k} \|^2_{\Hcal_X} = \; & \mathrm{E} \left\| \frac{1}{N_k} \sum_{i=1}^n \mathbb{I}_{ik} h_{X_i} - \gamma_{X \mid k}  \right\|^2_{\Hcal_X} + \mathrm{E} \| \bar{h}_n \|_{\Hcal_X}^2 \\
		& - 2 \mathrm{E} \left\langle \frac{1}{N_k} \sum_{i=1}^n \mathbb{I}_{ik} h_{X_i} - \gamma_{X \mid k} , \bar{h}_n \right\rangle_{\Hcal_X}.
	\end{split}
\end{align}

The first term of the right-hand-side of \eqref{eq:conditional_mean_deviation} is equal to,
\begin{align}\label{eq:discrete_term_1}
	\sum_{i = 1}^n \sum_{j = 1}^n \mathrm{E} \left( \frac{1}{N_k^2} \mathbb{I}_{ik} \mathbb{I}_{jk} \langle h_{X_i} - \gamma_{X \mid k} , h_{X_j} - \gamma_{X \mid k} \rangle_{\Hcal_X} \right).
\end{align}
Denote $v_{ij} = \langle h_{X_i} - \gamma_{X \mid k} , h_{X_j} - \gamma_{X \mid k} \rangle_{\Hcal_X} $. Then, for any distinct $i \neq j$, conditioning on the values of the corresponding indicators implies the summand in \eqref{eq:discrete_term_1} equals 
\begin{align*}
	\mathrm{E} \left( \frac{1}{N_k^2} v_{ij} \bigm| \mathbb{I}_{ik} \mathbb{I}_{jk} = 1 \right) \pi_k^2 = \mathrm{E} \left( \frac{1}{N_k^2} \bigm| \mathbb{I}_{ik} \mathbb{I}_{jk} = 1 \right) \mathrm{E} \left( v_{ij} \bigm| \mathbb{I}_{ik} \mathbb{I}_{jk} = 1 \right) \pi_k^2,
\end{align*}
where $\pi_k = P(Y = k)$. Using a similar argument as in \eqref{eq:parseval_argument} and the definition of $\gamma_{X \mid k}$, we have that $\mathrm{E} \left( v_{ij} \bigm| \mathbb{I}_{ik} \mathbb{I}_{jk} = 1 \right) = 0$, implying that \eqref{eq:discrete_term_1} is further equal to
\begin{align}\label{eq:discrete_term_2}
	\begin{split}
		& \sum_{i = 1}^n \mathrm{E} \left( \frac{1}{N_k^2} \mathbb{I}_{ik} \| h_{X_i} - \gamma_{X \mid k} \|^2_{\Hcal_X} \right) \\
		= \; & n \pi_k \mathrm{E} \left( \frac{1}{N_k^2} \mid \mathbb{I}_{1k} = 1 \right) \mathrm{E} \left( \| h_{X_1} - \gamma_{X \mid k} \|^2_{\Hcal_X} \mid \mathbb{I}_{1k} = 1 \right).
	\end{split}
\end{align}
Correspondingly, we have that
\begin{align*}
	\pi_k \mathrm{E} ( \| h_{X_1} - \gamma_{X \mid k} \|^2_{\Hcal_X} \mid \mathbb{I}_{1k} = 1 ) &=  \pi_k \{ \mathrm{E} ( \| h_{X_1} \|^2_{\Hcal_X} \mid \mathbb{I}_{1k} = 1 ) - \| \gamma_{X \mid k} \|_{\Hcal_X}^2 \} \\
	& \leq \mathrm{E} ( \| h_{X_1} \|^2_{\Hcal_X} ) < \infty.
\end{align*}
We next tackle the term $\mathrm{E} ( 1/N_k^2 \mid \mathbb{I}_{1k} = 1 )$. Conditioning on $\{ \mathbb{I}_{1k} = 1 \}$, the distribution of $(n - 1)^2/N_k^2$ is that of $(n - 1)^2/(B + 1)^2$, where $B$ follows a $\mathrm{Binomial}(n - 1, \pi_k)$ distribution. By Theorem 1 of \cite{shi2010note}, the expected value of $(n - 1)^\alpha/(B + c)^\alpha$ is of the order $\mathcal{O}(1)$ for any $c, \alpha > 0$. This further implies that \eqref{eq:discrete_term_2} and, consequently, the first term on the right-hand-side of \eqref{eq:conditional_mean_deviation}, is of the order $\mathcal{O}(1/n)$.

The second term of the right-hand-side of \eqref{eq:conditional_mean_deviation}, as shown in Theorem \ref{theo:asymptotics_theorem_1} and by Assumption \ref{assu:fourth_moments}, is of the order $\mathcal{O}(1/n)$. 

The third term of the right-hand-side of \eqref{eq:conditional_mean_deviation} can be expressed as
\begin{align}\label{eq:discrete_term_3}
	\begin{split}
		-\frac{2}{n} \sum_{i = 1}^n \sum_{j = 1}^n \mathrm{E} \left( \frac{1}{N_k} \mathbb{I}_{ik} \langle h_{X_i} - \gamma_{X \mid k} , h_{X_j} \rangle \right).
	\end{split}
\end{align}
Conditioning on the indicators, we can rewrite the contribution of any index pair of $i \neq j$ to the sum as
\begin{align*}
	\sum_{\ell = 1}^K \pi_k \pi_\ell \mathrm{E} ( 1 /N_k \mid \mathbb{I}_{ik} \mathbb{I}_{j\ell} = 1 ) \mathrm{E} (\langle h_{X_i} - \gamma_{X \mid k} , h_{X_j} \rangle \mid \mathbb{I}_{ik} \mathbb{I}_{j\ell} = 1 ),
\end{align*}
where the final expected value is equal to zero, following a similar argument to \eqref{eq:parseval_argument}. Therefore, only the index pairs of $i = j$ contribute to the sum \eqref{eq:discrete_term_3}, which then takes the form,
\begin{align*}
	& -\frac{2}{n} \sum_{i = 1}^n \mathrm{E} \left( \frac{1}{N_k} \mathbb{I}_{ik} \langle h_{X_i} - \gamma_{X \mid k} , h_{X_i} \rangle \right) \\
	=& -2 \pi_k \mathrm{E} (1/N_k \mid \mathbb{I}_{ik} = 1) \mathrm{E} (\langle h_{X_i} - \gamma_{X \mid k} , h_{X_i} \rangle \mid \mathbb{I}_{ik} = 1 ).
\end{align*}
Now, $\mathrm{E} (1/N_k \mid \mathbb{I}_{ik} = 1) = \mathcal{O}(1/n)$ by Theorem 1 of \cite{shi2010note}. Moreover, we can show that the term $\pi_k \mathrm{E} (\langle h_{X_i} - \gamma_{X \mid k} , h_{X_i} \rangle \mid \mathbb{I}_{ik} = 1 )$ is of the order $\mathcal{O}(1)$. Consequently, the third term on the right-hand-side of \eqref{eq:conditional_mean_deviation} is of the order $\mathcal{O}(1/n)$. 

Applying the Markov's inequality obtains that $\| \hat{\gamma}_{X \mid k} - \gamma_{X \mid k} \|_{\Hcal_X} = \mathcal{O}_p(1/\sqrt{n})$.

We next show that $\sum_{k = 1}^K (N_k/n) (\hat{\gamma}_{X \mid k} \otimes \hat{\gamma}_{X \mid k})$ converges in the Hilbert-Schmidt norm to $\Gamma_{XX \mid Y} = \sum_{k=1}^K \pi_k (\gamma_{X \mid k} \otimes \gamma_{X \mid k})$.
Note that the norm of the difference $ \sum_{k = 1}^K (N_k/n) (\hat{\gamma}_{X \mid k} \otimes \hat{\gamma}_{X \mid k}) - \sum_{k = 1}^K \pi_k (\gamma_{X \mid k} \otimes \gamma_{X \mid k})$ is upper-bounded by
\vspace{-0.01in}
\begin{align}\label{eq:expected_tensor_product_convergence}
	& \left\| \sum_{k = 1}^K (N_k/n) (\hat{\gamma}_{X \mid k} \otimes \hat{\gamma}_{X \mid k}) - \sum_{k = 1}^K \pi_k (\gamma_{X \mid k} \otimes \gamma_{X \mid k}) \right\|_{\mathrm{HS}} \nonumber \\
	\leq \; & \sum_{k = 1}^K \| (N_k/n) (\hat{\gamma}_{X \mid k} \otimes \hat{\gamma}_{X \mid k}) - \pi_k  (\gamma_{X \mid k} \otimes \gamma_{X \mid k}) \|_{\mathrm{HS}} \nonumber \\
	\leq& \sum_{k = 1}^K \{ (N_k/n) \| (\hat{\gamma}_{X \mid k} \otimes \hat{\gamma}_{X \mid k}) - (\gamma_{X \mid k} \otimes \gamma_{X \mid k}) \|_{\mathrm{HS}} + |(N_k/n) - \pi_k| \| \gamma_{X \mid k} \otimes \gamma_{X \mid k} \|_{\mathrm{HS}} \} \nonumber\\
	\leq \; & \sum_{k = 1}^K \{ (N_k/n) \|  ( \hat{\gamma}_{X \mid k} - \gamma_{X \mid k}) \otimes \hat{\gamma}_{X \mid k} \|_{\mathrm{HS}} \nonumber\\
	& + (N_k/n) \| \{ \gamma_{X \mid k} \otimes ( \hat{\gamma}_{X \mid k} - \gamma_{X \mid k} ) \} \|_{\mathrm{HS}}
	+ |(N_k/n) - \pi_k| \| \gamma_{X \mid k} \otimes \gamma_{X \mid k} \|_{\mathrm{HS}} \}.
\end{align}
Note that $\| ( \hat{\gamma}_{X \mid k} - \gamma_{X \mid k}) \otimes \hat{\gamma}_{X \mid k} \|_{\mathrm{HS}}^2 = \| \hat{\gamma}_{X \mid k} - \gamma_{X \mid k} \|_{\Hcal_X}^2 \| \hat{\gamma}_{X \mid k} \|^2_{\Hcal_X} = \mathcal{O}_p(1/n)$, because $ \|  \hat{\gamma}_{X \mid k} \|^2_{\Hcal_X}$ converges to the finite constant $ \|  \gamma_{X \mid k} \|^2_{\Hcal_X}$ as $n \rightarrow \infty$.  Similarly, we can show that $\| \gamma_{X \mid k} \otimes ( \hat{\gamma}_{X \mid k} - \gamma_{X \mid k} ) \|_{\mathrm{HS}}^2 = \mathcal{O}_p(1/n)$, and that $|(N_k/n) - \pi_k| = \mathcal{O}_p(1/\sqrt{n})$, which follows from the standard Central Limit Theorem. Substituting all terms with their rates in  \eqref{eq:expected_tensor_product_convergence}, we have that $\| \sum_{k = 1}^K (N_k/n)  (\hat{\gamma}_{X \mid k} \otimes \hat{\gamma}_{X \mid k}) - \sum_{k = 1}^K \pi_k (\gamma_{X \mid k} \otimes \gamma_{X \mid k}) \|_{\mathrm{HS}} = \mathcal{O}_p(1/\sqrt{n})$.

We further employ the proof of Theorem \ref{theo:asymptotics_theorem_1} to deal with the inclusion of the pseudo-inverses. This completes the proof of Theorem \ref{theo:asymptotics_theorem_2}. 
\eop

\bibhang=1.7pc
\bibsep=2pt
\fontsize{9}{14pt plus.8pt minus .6pt}\selectfont
\renewcommand\bibname{\large \bf References}
\expandafter\ifx\csname
natexlab\endcsname\relax\def\natexlab#1{#1}\fi
\expandafter\ifx\csname url\endcsname\relax
  \def\url#1{\texttt{#1}}\fi
\expandafter\ifx\csname urlprefix\endcsname\relax\def\urlprefix{URL}\fi

\bibliographystyle{chicago}      
\bibliography{ref-msdr}





\end{document}